\begin{document}

\newtheorem{thm}{Theorem}[section]
\newtheorem*{thm*}{Theorem}
\newtheorem{cor}[thm]{Corollary}
\newtheorem{lemma}[thm]{Lemma}
\newtheorem{lemmadef}[thm]{Lemma/Definition}
\newtheorem{proposition}[thm]{Proposition}
\newtheorem{prop}[thm]{Proposition}

\newenvironment{definition}[1][Definition]{\begin{trivlist}
\item[\hskip \labelsep {\bfseries #1}]}{\end{trivlist}}
\newenvironment{example}[1][Example]{\begin{trivlist}
\item[\hskip \labelsep {\bfseries #1}]}{\end{trivlist}}
\newenvironment{remark}[1][Remark]{\begin{trivlist}
\item[\hskip \labelsep {\bfseries #1}]}{\end{trivlist}}
\newenvironment{remarks}[1][Remarks]{\begin{trivlist}
\item[\hskip \labelsep {\bfseries #1}]}{\end{trivlist}}

\theoremstyle{definition}
\newtheorem{defn}[thm]{Definition}
\newtheorem{defns}[thm]{Definitions}
\newtheorem{rem}[thm]{Remark}
\newtheorem{Rem}[thm]{Remark}
\newtheorem{Rems}[thm]{Remarks}
\newtheorem{Exercise}[thm]{Exercise}
\newtheorem{Example}[thm]{Example}
\newtheorem{Examples}[thm]{Examples}
\newtheorem{Open questions}[thm]{Open questions}
\newtheorem{Open question}[thm]{Open question}
\newtheorem{Open problems}[thm]{Open problems}
\newtheorem{Open problem}[thm]{Open problem}

\newcommand{\leftexp}[2]{{\vphantom{#2}}^{#1}\!{#2}}
\newcommand{\beq}{\begin{eqnarray*}}
\newcommand{\eeq}{\end{eqnarray*}}
\newcommand{\ol}{\overline}
\newcommand{\Supp}{\textrm{Supp}}
\newcommand{\Span}{\textrm{Span}}
\newcommand{\Gr}{\textbf{Gr} }
\newcommand{\GL}{\textrm{GL}}
\newcommand{\SL}{\textrm{SL}}
\newcommand{\Matr}{\textbf{Matr}}
\newcommand{\mat}[9]{\left(\begin{array}{ccc}#1&#2&#3\\#4&#5&#6\\#7&#8&#9\end{array}\right)}
\newcommand{\rowm}[3]{\left(\begin{array}{c}#1\\#2\\#3\end{array}\right)}
\newcommand{\rk}{\textrm{rk}~}
\newcommand{\im}{\textrm{im}~}
\newcommand{\Div}{\textrm{div}~}
\newcommand{\z}{\overline}
\newcommand{\Par}[2]{\frac{\partial#1}{\partial#2}}
\newcommand{\hf}{\frac{1}{2}}
\newcommand{\lf}{\left(}
\newcommand{\rt}{\right)}
\newcommand{\re}{\textrm{Re}}
\newcommand{\img}{\textrm{Im}}
\newcommand{\limn}{{\lim\atop{n\to\infty}}}
\newcommand{\Res}{\textrm{Res}}
\newcommand{\eps}{\epsilon}
\newcommand{\tp}{\frac{1}{2\pi i}}
\newcommand{\of}{\circ}
\newcommand{\ub}{\underbrace}
\newcommand{\K}{\tilde{k}}
\newcommand{\floor}[1]{{\lfloor#1\rfloor}}
\newcommand{\Gal}{\textrm{Gal}}
\newcommand{\Aut}{\textrm{Aut}}
\newcommand{\out}[1]{{\textrm{Out}(F_#1)}}
\newcommand{\pr}[2]{\langle#1,#2\rangle}
\newcommand{\Tr}{\textrm{Tr}}
\newcommand{\id}{\mathfrak}
\newcommand{\rad}{\mathfrak{R}}
\newcommand{\Spec}{\mathrm{Spec}}
\newcommand{\tensor}{\otimes}
\newcommand{\Ann}{\textrm{Ann}}
\renewcommand{\phi}{\varphi}
\newcommand{\Ker}{\textrm{Ker}~}
\newcommand{\Img}{\textrm{Im}~}
\newcommand{\Z}{\mathbb{Z}}
\newcommand{\N}{\mathbb{N}_{\geq0}}
\newcommand{\trans}[1]{{\leftexp{T}{#1}}}
\def\onto{{\kern3pt\to\kern-8pt\to\kern3pt}}
\newcommand{\IO}{\textrm{IO}}
\newcommand{\IA}{\textrm{IA}}
\newcommand{\Stab}{\textrm{Stab}}
\newcommand{\ssim}[1]{\stackrel{(#1)}{\sim}}
\newcommand{\Lk}[1]{\textrm{Lk}_<(#1)}
\newcommand{\set}[1]{\left\{#1\right\}}
\newcommand{\ssm}{\smallsetminus}
\newcommand{\abs}[1]{\left|#1\right|}
\newcommand{\Dist}{\textup{Dist}}

\newcommand{\wh}{\widehat}
\newcommand{\wt}{\widetilde}
\newcommand{\wb}{\overline}
\newcommand{\ihat}{\hat{\imath}}
\newcommand{\fhat}{\hat{f}}
\newcommand{\ibar}{\overline{\imath}}
\newcommand{\fbar}{\overline{f}}

\def\ms{\medskip}
\renewcommand{\ss}{\smallskip}
\newcommand{\bs}{\bigskip}

\newcommand{\tc}[2]{\textcolor{#1}{#2}}
\definecolor{cerulean}{rgb}{0,.48,.65} \newcommand{\cerulean}[1]{\tc{cerulean}{#1}}
\definecolor{magenta}{rgb}{.5,0,.5} \newcommand{\magenta}[1]{\tc{magenta}{#1}}
\definecolor{dred}{rgb}{.5,0,0} \newcommand{\dred}[1]{\tc{dred}{#1}}
\definecolor{green}{rgb}{0,.5,0} \newcommand{\green}[1]{\tc{green}{#1}}
\definecolor{blue}{rgb}{0,0,0.5} \newcommand{\blue}[1]{\tc{blue}{#1}}
\definecolor{black}{rgb}{0,0,0} \newcommand{\black}[1]{\tc{black}{#1}}
\definecolor{dgreen}{rgb}{0,.3,0} \newcommand{\dgreen}[1]{\tc{dgreen}{#1}}
\definecolor{vdred}{rgb}{.3,0,0} \newcommand{\vdred}[1]{\tc{vdred}{#1}}
\definecolor{red}{rgb}{1,0,0} \newcommand{\red}[1]{\tc{red}{#1}}
\definecolor{salmon}{rgb}{0.98,0.50,0.45} \newcommand{\salmon}[1]{\tc{salmon}{#1}}
\definecolor{gray}{rgb}{.5,.5,.5} \newcommand{\gray}[1]{\tc{gray}{#1}}
\definecolor{seagreen}{rgb}{0.13,0.70,0.67} \newcommand{\seagreen}[1]{\tc{seagreen}{#1}}
\definecolor{chartreuse}{rgb}{0.40,0.80,0.00}\newcommand{\chartreuse}[1]{\textcolor{chartreuse}{#1}}
\definecolor{cornflower}{rgb}{0.39,0.58,0.93} \newcommand{\cornflower}[1]{\textcolor{cornflower}{#1}}
\definecolor{gold}{rgb}{0.80,0.68,0.00}\newcommand{\gold}[1]{\textcolor{gold}{#1}}

\setlength{\parindent}{0pt}
\setlength{\parskip}{7pt}

\title{Cannon--Thurston maps, subgroup distortion, \\ and hyperbolic hydra}

\author{Owen Baker% 
 \thanks{\texttt{otb2@cornell.edu}}}
\affil{Department of Mathematics, 900 University Avenue, \\  University of California, Riverside, CA 92521, USA}

\author{Timothy Riley% 
\thanks{\texttt{tim.riley@math.cornell.edu}. Partial  support from NSF grant DMS--1101651 and from Simons Foundation Collaboration Grant 208567 is gratefully acknowledged.}}
\affil{Department of Mathematics, 310 Malott Hall, \\  Cornell University, Ithaca, NY 14853, USA}

\date \today

\maketitle

\begin{abstract}       
\noindent   There is a family of hyperbolic groups known as \emph{hyperbolic hydra}  which contain heavily distorted free subgroups.  We prove the existence of Cannon--Thurston maps (that is, maps of the boundaries induced by subgroup inclusion) for these free subgroups.  
It is  known that Cannon--Thurston maps between hyperbolic \emph{space} boundaries can exist even in the presence of arbitrarily heavy (even non-recursive) distortion.
The hyperbolic hydra examples show that Cannon--Thurston maps can exist even between hyperbolic \emph{group} boundaries in the presence of arbitrarily heavy primitive recursive distortion.

\ms \noindent  \textbf{2010 Mathematics Subject Classification:  20F67 }   
% 20F65   	Geometric group theory [See also 05C25, 20E08, 57Mxx]
% 20F10   	Word problems, other decision problems, connections with logic and automata
%	20F67   	Hyperbolic groups and nonpositively curved groups
 \\   \emph{Key words and phrases:} Cannon--Thurston map, hyperbolic group, subgroup distortion,  hydra, Ackermann's function
\end{abstract}

\section{Introduction} \label{intro} 
An isometry of hyperbolic $n$-space $\mathbb{H}^n$ induces a homeomorphism on $\partial \mathbb{H}^n=S^{n-1}$, the sphere at infinity.  More generally, an isometric embedding $\mathbb{H}^m\hookrightarrow\mathbb{H}^n$ induces an embedding $S^{m-1}\hookrightarrow S^{n-1}$; a quasi-isometric embedding $X\to Y$ of (Gromov-)hyperbolic spaces induces an embedding $\partial X\hookrightarrow\partial Y$ of the Gromov boundaries.   Remarkably, many natural embeddings which are far from being isometric nonetheless induce maps on the boundaries.   These are known as \emph{Cannon--Thurston maps} in honor of  the researchers who gave the first exotic example: let $M$ be a hyperbolic 3-manifold fibering over the circle with hyperbolic surface fiber $S$ and pseudo-Anosov monodromy.  The inclusion $S\hookrightarrow M$ of a fiber induces an embedding $f:\mathbb{H}^2=\widetilde{S}\hookrightarrow\widetilde{M}=\mathbb{H}^3$ of the universal covers, which they showed induces a (surjective!) map $\fhat:S^1\to S^2$ \cite{Cannon-Thurston}.

Cannon \& Thurston's example can be viewed as a map $\partial\pi_1S=S^1\to S^2=\partial\mathbb{H}^3$ induced by an orbit map $\pi_1S\to\mathbb{H}^3$ of the surface Kleinian group $\pi_1S$.  Thus one way of generalizing the Cannon--Thurston example involves replacing $\pi_1S$ with an arbitrary finitely generated Kleinian group.  In a series of papers  culminating in \cite{Mj}, Mahan Mj (formerly  Mitra) showed that for any finitely generated Kleinian group $G$, the orbit map $G\to\mathbb{H}^3$ always extends continuously to the the boundary, inducing a \emph{Cannon--Thurston map} $\partial G\to S^2$.

Another interpretation of Cannon \& Thurston's example is as a map $\partial\pi_1S=S^1\to S^2=\partial\pi_1 M$ induced by the inclusion of (Gromov)-hyperbolic groups $\pi_1S\hookrightarrow\pi_1M$.  (Section~\ref{hyperbolic review} contains background on hyperbolic groups and their boundaries.)  This leads to another direction of generalization:  Let $\Lambda\leq\Gamma$ be hyperbolic groups.
If the inclusion map $f:\Lambda\hookrightarrow\Gamma$ extends to a continuous map $\fbar:\Lambda\cup\partial\Lambda\to\Gamma\cup\partial\Gamma$ of the Gromov compactifications, then $\fbar$ (or its restriction $\fhat:\partial\Lambda\to\partial\Gamma$) is called a \emph{Cannon--Thurston map.} 
When a Cannon--Thurston map exists, it is unique.

Mj showed this map exists when  $\Lambda$ is an infinite  hyperbolic normal subgroup of  a hyperbolic group  $\Gamma$ \cite{CTnormal}.    He also showed  it exists  when   $\Gamma$ is a hyperbolic group which is a   finite graph of hyperbolic  groups, with $\Lambda$ one of the (infinite) vertex- or edge-groups,  under the assumption that the edge inclusions are quasi-isometric embeddings \cite{CTtrees}.    We gave the first example  of a hyperbolic group with hyperbolic subgroup, for which there is no  Cannon--Thurston map \cite{BR2}.   Matsuda \& Oguni   showed our example  leads to examples where the subgroup in question can be any non-elementary hyperbolic group or, even, relatively hyperbolic group  \cite{MO}.

The fact that the original Cannon--Thurston map $\fhat:S^1\to S^2$ of \cite{Cannon-Thurston} is surjective (space-filling) stems from the difference between the intrinsic metric of the hyperbolic surface fiber $S$ and the ambient metric from the hyperbolic 3-manifold $M$.  That is, \emph{from the fact that} $\widetilde{S}\hookrightarrow\widetilde{M}$ (or $\pi_1S\hookrightarrow\pi_1M$) is \emph{distorted}.

For a finitely generated subgroup $\Lambda$ of a finitely generated group $\Gamma$,  define the \emph{distortion function} $$\Dist^{\Gamma}_{\Lambda}(n)  \ : = \  \max \set{ \, d_{\Lambda}(e,h) \, \mid \, h \in {\Lambda}, \, d_{\Gamma}(e,h) \leq n \, },$$ where $d_{\Gamma}$ and $d_{\Lambda}$ are word metrics with respect to some finite generating sets.  We say that $f \preceq g$ for  $f,g: \mathbb{N} \to \mathbb{N}$  when there exists $C>0$ such that $f(n) \leq Cg(  Cn+C ) +Cn+C$ for all $n \geq 0$.  We say $f  \simeq g$ when $f \preceq g$ and $g \preceq f$.   Up to $\simeq$, $\Dist^{\Gamma}_{\Lambda}(n)$ does not depend on the choices of finite generating sets.   A similar definition  applies in the Kleinian groups context: $$\Dist^{\mathbb{H}^3}_{G}(n) \  : = \  \max \set{ \, d_{G}(e,g) \, \mid \, g \in G, \, d_{\mathbb{H}^3}(x_0,g\cdot x_0) \leq n \, }.$$

If $\Lambda$ is an undistorted subgroup in a hyperbolic group $\Gamma$ (that is, $\Dist_{\Lambda}^{\Gamma} (n)  \preceq n$), then $\Lambda$ is also hyperbolic (e.g.\ \cite[page~461]{BH1}) and the  Cannon--Thurston map $\partial{\Lambda} \to \partial{\Gamma}$ is readily seen to   exist and be injective. 

It is natural to ask whether extreme distortion is an obstacle to the existence of a Cannon--Thurston map.  In the Kleinian group setting, it is not.    Cannon--Thurston maps   always exist for surface Kleinian groups:  McMullen \cite{McMullen} proved this in the punctured torus case and Mj~\cite{Mj} in the general case.  
And Mj proved that distortion functions can be arbitrarily wild in this setting \cite[p.160--161]{CTtrees}, even non-recursive, by a construction based on ideas of  Minsky from \cite{MinskyPuncturedTorus}.

For hyperbolic subgroups of hyperbolic groups, the relation (or lack thereof) between distortion and the existence of Cannon--Thurston maps is less clear.  As we mentioned earlier, Mj proved they exist for infinite normal hyperbolic subgroups of hyperbolic groups.  These are never more than exponentially distorted. Further, Mj  established in  \cite[p.159--160]{CTtrees} that  they exist  for all $k \geq 1$  for certain examples  (based on a construction of Bestvina, Feighn, \& Handel~\cite{BFHLaminations}) which display  $k$-fold iterated exponential distortion.   There are examples of heavier distortion where the existence of the  Cannon--Thurston map remains unknown, specifically   Mj's   example where the distortion exceeds a $k$-fold iterated exponential for all $k$ (but by Corollary~3 of \cite{Bernasconi} (unpublished) is dominated by the function $A_4$ discussed below), and the closely related CAT(-1) examples of Barnard, Brady \& Dani~\cite{BBD}. 
 
In this article  we show that  arbitrarily fast-growing primitive recursive  distortion is no barrier to the  existence of Cannon--Thurston maps for hyperbolic subgroups of hyperbolic groups.

The  \emph{hyperbolic hydra} $\Gamma_k$ ($k=1, 2, \ldots$) of  \cite{HypHydra} are a family of hyperbolic groups  with finite-rank free subgroups  $\Lambda_k$ exhibiting the fastest-growing distortion functions  known for hyperbolic subgroups of hyperbolic groups:     $\Dist^{\Gamma_k}_{\Lambda_k}$ grows at least like $A_k$,  the $k$-th of  Ackermann's family of increasingly fast-growing functions  that begins with $A_1(n)=2n$,  $A_2(n)=2^n$, and $A_3(n)$ equalling the height-$n$ tower of powers of $2$.   
 Any primitive recursive function is dominated by some $A_k$ (see for example pages~11--21 of \cite{Calude}).   We will prove in Section~\ref{CT for hydra}:

\begin{thm} \label{existence thm}
Hyperbolic hydra have Cannon--Thurston maps $\partial{\Lambda_k}\to\partial{\Gamma_k}$ for all $k$.
\end{thm}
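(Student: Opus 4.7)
The plan is to verify Mitra's criterion: for hyperbolic subgroups $\Lambda \leq \Gamma$, a Cannon--Thurston map $\partial \Lambda \to \partial \Gamma$ exists if and only if for every $M > 0$ there exists $N > 0$ such that every $\Lambda$-geodesic $\alpha$ with $d_\Lambda(1, \alpha) \geq N$ has every $\Gamma$-geodesic between its endpoints avoiding the ball $B_\Gamma(1, M)$.

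First I would recall the construction of $\Gamma_k$ from \cite{HypHydra}: it is built from the free group $\Lambda_k$ by iteratively adjoining stable letters governed by hydra-type conjugation relations. This yields a splitting of $\Gamma_k$ as an HNN-style graph of groups, and hence an action of $\Gamma_k$ on a Bass--Serre tree $T_k$ with $\Lambda_k$ stabilizing a distinguished vertex $v_0$. For a $\Lambda_k$-geodesic $\alpha$ between $x,y \in \Lambda_k$ with $d_{\Lambda_k}(1,\alpha) \geq N$, I would analyze a $\Gamma_k$-geodesic $\beta$ between $x$ and $y$ by projecting it to a loop $\bar\beta$ in $T_k$ based at $v_0$. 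Two cases arise: $\bar\beta$ is trivial, so that $\beta$ sits inside $\Lambda_k$; or $\bar\beta$ makes excursions, so that $\beta$ crosses stable letters.

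The heart of the argument is to show in both cases that $d_{\Gamma_k}(1,\beta) \geq M$. In the excursion case I would exploit hyperbolicity of $\Gamma_k$ together with quasi-convexity of the vertex stabilizers of the splitting: a tree excursion forces $\beta$ to pass through strata far from $v_0$, and a short $\Gamma_k$-loop cannot realize a substantial excursion close to $1$ without violating the thin-triangle bound. In the case $\beta \subset \Lambda_k$, I would appeal to the explicit form of the hydra relators, which catalogue exactly how a $\Lambda_k$-word can be shortened in $\Gamma_k$; a pigeonhole argument on configurations coming within $M$ of $1$ then bounds $d_{\Lambda_k}(1,\alpha)$ by a function of $M$ alone.

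The main obstacle is the second case. Because the distortion is Ackermannian, a $\Gamma_k$-geodesic lying inside $\Lambda_k$ need not be a $\Lambda_k$-quasi-geodesic, so no soft comparison of the $\Lambda_k$- and $\Gamma_k$-geometries is available. I expect the resolution to be an induction on $k$, paralleling the inductive construction of the hydra: at each step the newly adjoined stable letter introduces a single additional shortening mechanism, which can be controlled uniformly provided Mitra's criterion is already established at level $k-1$. The base case, a linearly distorted free subgroup of a hyperbolic group, is covered by the quasi-isometric embedding discussion in Section \ref{intro}.
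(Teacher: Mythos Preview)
Your proposal rests on a structural picture that does not match the hyperbolic hydra.  You describe $\Gamma_k$ as built from $\Lambda_k$ by adjoining stable letters, with $\Lambda_k$ then appearing as a vertex stabilizer in a Bass--Serre tree.  But $\Gamma_k = F \rtimes_\theta \Z$ with $F$ free on $a_0,\ldots,a_k,b_1,\ldots,b_l$, and $\Lambda_k = \langle a_0t,\ldots,a_kt,b_1,\ldots,b_l\rangle$ is neither normal nor a vertex group in any evident graph-of-groups decomposition of $\Gamma_k$.  The HNN splittings of $\Gamma_k$ mentioned in the paper (over $\Z$, with stable letter $a_k$) have base groups that do not contain $\Lambda_k$, since $a_kt \in \Lambda_k$ involves the stable letter.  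Consequently your dichotomy ``$\bar\beta$ trivial $\Rightarrow$ $\beta \subset \Lambda_k$'' fails: a $\Gamma_k$-geodesic between points of $\Lambda_k$ will typically wander through $F\rtimes\Z$ in ways that neither stay in $\Lambda_k$ nor correspond to tree excursions in your sense.  The appeal to quasiconvexity of vertex stabilizers is therefore beside the point.

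The paper's argument is organized quite differently.  The one place Mitra's existing results enter is for the \emph{normal} subgroup $F \lhd \Gamma_k$: since $F$ is normal, $\partial F \to \partial\Gamma_k$ exists by \cite{CTnormal}.  The link to $\Lambda_k$ is made by passing each $\Lambda_k$-word $w$ to its free-by-cyclic normal form $\wh{w}t^m$ and tracking the ``shadow'' in $C(F)$ of a $\Lambda_k$-geodesic --- the concatenation of $F$-geodesics between the successive $\wh{w_i}$.  Lemma~\ref{lemma:farshadow} says: if the shadow stays far from $e$ in $F$, then $\Gamma_k$-geodesics between the endpoints stay far from $e$.  The remaining work (Lemma~\ref{lem: N exists} and the downward induction on $r$ through the filtration $\Lambda_0 \subset \cdots \subset \Lambda_k$ inside the \emph{fixed} $\Gamma_k$) controls where the highest-index letters $a_r^{\pm1}$ sit in normal forms, which is what forces shadows outward.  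This is an induction on the letter index $r$, not on $k$, and its base case is $\Lambda_0 = \langle b_1,\ldots,b_l\rangle$, handled by a direct estimate, not by undistortedness.  Your proposed induction on $k$ with an undistorted base case does not line up with this structure and, as written, has no mechanism to compare $\Lambda_{k-1}\hookrightarrow\Gamma_{k-1}$ with anything inside $\Gamma_k$.
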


While  heavy distortion may fail to obstruct the existence of Cannon--Thurston maps, it is natural to expect it to manifest as some sort of wildness in the map: witness, for example, how  Cannon \& Thurston's original example      is a space-filling curve $S^1 \onto S^2$ \cite{Cannon-Thurston}.  More generally, wildness manifests in Cannon--Thurston maps  in the relationship between ``$\varepsilon$''  and ``$\delta$'' in their continuity (having given the boundaries natural metrics).  
This was made precise by Miyachi~\cite{Miyachi}  in the Kleinian group setting when the group is a finitely generated Fuchsian group of the first kind with bounded geometry and no parabolic elements:
Miyachi \cite{Miyachi} gives an upper bound on the modulus of continuity for the Cannon--Thurston map, and shows it is not H\"older continuous when the group is geometrically infinite.  

In this article we will establish the corresponding result for hyperbolic subgroups of hyperbolic groups.  
Here are more details.  The \emph{modulus of continuity} $\varepsilon: [0, \infty) \to [0, \infty]$  of a function $f:U\to V$ between metric spaces  is 
$$\varepsilon(\delta)  \ := \ \sup \{  d_V(f(a), f(b))  \mid  a,b \in U \textup{ with }  d_U(a,b) \leq \delta \}.$$
This notion goes back at least to Lebesgue in 1909 \cite{Lebesgue}.  
An upper bound on $\varepsilon(\delta)$ expresses a degree of good behaviour:  $f$ is  uniformly continuous if $\varepsilon(\delta)\to0$ as $\delta\to0$;   is Lipschitz  
if $\varepsilon(\delta)\leq C\delta$ for a constant $C >0$; and is  $\alpha$-H\"older if $\varepsilon(\delta) \leq C\delta^\alpha$  for a constant $C >0$.   
Wildness manifests in lower bounds on $\varepsilon(\delta)$, expressing that $\varepsilon(\delta)$ is extravagantly larger than $\delta$ when $\delta$ is close to $0$.   We will prove:

\begin{thm}
\label{Distortion and Wildness}
Suppose $\Lambda \leq \Gamma$ are hyperbolic, $\Lambda$ is non-elementary,  the Cannon--Thurston map $\partial \Lambda \to \partial \Gamma$  exists, and $r, s >1$ are any    visual parameters for visual metrics on $\partial \Lambda$ and $\partial \Gamma$, respectively.   Then there exist $\alpha,\beta>0$ so that for all $n \geq 0$ the modulus of continuity satisfies 
\[ \varepsilon\left(\frac{\beta}{r^{\Dist_{\Lambda}^{\Gamma}(n)}}\right) \ \geq \ \frac{\alpha}{s^n}. \]
\end{thm}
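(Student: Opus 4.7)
The plan is, for each $n$, to exhibit an explicit pair $a, b \in \partial\Lambda$ that is close in the visual metric on $\partial\Lambda$ but whose Cannon--Thurston images $f(a), f(b)$ are far apart in $\partial\Gamma$. The construction rests on two standard ingredients: $\Lambda$-equivariance of $f\colon\partial\Lambda \to \partial\Gamma$ (immediate from $f$ being the continuous extension of the $\Lambda$-equivariant orbit map), and the identity $(\zeta \mid \eta)_x = d(x, [\zeta, \eta]) + O(\delta)$ that reads the Gromov product of two boundary points as the distance from the basepoint to any biinfinite geodesic $[\zeta,\eta]$ between them.

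First, since $\Lambda$ is non-elementary, I would fix a loxodromic $\gamma \in \Lambda$ and let $\xi_1, \xi_2 \in \partial \Lambda$ be its two fixed points. Because $\gamma$ has infinite order in $\Lambda$, and hence in $\Gamma$, it is also loxodromic in $\Gamma$, and its two distinct fixed points in $\partial\Gamma$ are exactly $f(\xi_1)$ and $f(\xi_2)$; in particular $f(\xi_1) \neq f(\xi_2)$. Then, for each $n \geq 0$, pick $h \in \Lambda$ realising the distortion, so $d_\Gamma(e, h) \leq n$ while $d_\Lambda(e, h) = D := \Dist_\Lambda^\Gamma(n)$, and set $a := h\xi_1$, $b := h\xi_2$. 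Equivariance gives $f(a) = h f(\xi_1)$ and $f(b) = h f(\xi_2)$.

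The two Gromov-product estimates are then quick applications of the triangle inequality to the translated biinfinite geodesics $[a, b] = h \cdot [\xi_1, \xi_2]$ in $\Lambda$ and $[f(a), f(b)] = h \cdot [f(\xi_1), f(\xi_2)]$ in $\Gamma$:
\[
d_\Lambda(e, [a,b]) \ = \ d_\Lambda(h^{-1}, [\xi_1, \xi_2]) \ \geq \ D - d_\Lambda(e, [\xi_1, \xi_2]),
\]
\[
d_\Gamma(e, [f(a), f(b)]) \ = \ d_\Gamma(h^{-1}, [f(\xi_1), f(\xi_2)]) \ \leq \ n + d_\Gamma(e, [f(\xi_1), f(\xi_2)]).
\]
Invoking the Gromov-product identity above yields $(a \mid b)_e \geq D - C_1$ in $\Lambda$ and $(f(a) \mid f(b))_e \leq n + C_2$ in $\Gamma$, with $C_1, C_2$ depending only on $\gamma$ and the hyperbolicity constants. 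Pushing these through the visual-metric estimates $d_{\partial\Lambda}(\zeta,\eta) \asymp r^{-(\zeta \mid \eta)_e}$ and $d_{\partial\Gamma}(\zeta,\eta) \asymp s^{-(\zeta \mid \eta)_e}$ gives $d_{\partial\Lambda}(a, b) \leq \beta\, r^{-D}$ and $d_{\partial\Gamma}(f(a), f(b)) \geq \alpha\, s^{-n}$ for constants $\alpha, \beta > 0$ independent of $n$. As $(a, b)$ is an admissible pair in the definition of $\varepsilon(\beta\, r^{-D})$, the theorem follows.

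I do not anticipate a serious obstacle beyond bookkeeping. The additive $O(\delta)$ errors in the Gromov-product/geodesic identity and the multiplicative constants in the visual-metric comparisons must be absorbed into $\alpha$ and $\beta$, and one must verify up front that these constants can indeed be chosen independently of $n$---which they can, since $\gamma$ and the visual metrics are fixed once and for all.
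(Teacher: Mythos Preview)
Your overall strategy---translate a fixed pair of boundary points by a distortion-realising $h$ and read off Gromov products via distances to biinfinite geodesics---is the same as the paper's, but there is a genuine error in your first displayed inequality. You claim
\[
d_\Lambda(h^{-1}, [\xi_1,\xi_2]) \ \geq \ D - d_\Lambda(e, [\xi_1,\xi_2]),
\]
but the triangle inequality only gives the \emph{upper} bound $d_\Lambda(h^{-1}, [\xi_1,\xi_2]) \leq D + d_\Lambda(e, [\xi_1,\xi_2])$. The lower bound you want is simply false: nothing stops $h^{-1}$ from landing close to (or on) the axis of $\gamma$. Concretely, in $F_2=\langle x,y\rangle$ with $\gamma=x$, the axis $[\xi_1,\xi_2]$ is $\{x^n\}$; if $h^{-1}=x^D$ then $d_\Lambda(h^{-1},[\xi_1,\xi_2])=0$ while the right side is $D$. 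In that situation $(a\mid b)_e^\Lambda$ is bounded, so $a,b$ are \emph{not} close in $\partial\Lambda$ and the pair is useless for bounding the modulus of continuity.

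This is exactly why the paper takes \emph{three} boundary points $p_1,p_2,p_3$ with distinct Cannon--Thurston images. Applying hyperbolicity (in the form $(p_i\cdot p_j)_e^\Lambda \geq \min\{(e\cdot h p_i)_h^\Lambda,(e\cdot h p_j)_h^\Lambda\}-2\delta_\Lambda$) to each of the three pairs shows that at most one of the three values $(e\cdot h p_k)_h^\Lambda$ can be large; the remaining two indices $i,j$ then satisfy $(h p_i\cdot h p_j)_e^\Lambda \geq D - C$. Your $\Gamma$-side estimate is fine and matches the paper's, so the fix is just to replace the single loxodromic axis by three points and let the pair $(i,j)$ depend on $n$. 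Your device of using loxodromic fixed points to guarantee distinct images is perfectly good---two loxodromics with disjoint fixed-point sets would supply enough points---but note that with two points alone the argument cannot be rescued.
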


While we have not been able to find this theorem elsewhere in the literature, we understand that it is known to some experts.  Indeed, the second author first heard of a suggestion  of a relationship between the modulus of continuity of Cannon--Thurston map and subgroup distortion from Mahan Mj.   The moduli of continuity of Cannon--Thurston maps have received attention before in the Kleinian group setting.  For a faithful discrete representation to $\textup{PSL}_2(\mathbb{C})$ of a  finitely generated Fuchsian group of the first type with bounded geometry and no parabolic elements,  Miyachi \cite{Miyachi} gives an upper bound on the modulus of continuity of the Cannon--Thurston map.

The reason we insist  that $\Lambda$ be non-elementary  (i.e.\  contains an $F_2$ subgroup; see e.g. \cite[Theorem~2.28]{KB})  in this theorem is that elementary $\Lambda$ are not interesting in this context: if $\Lambda$ is finite, then $\partial \Lambda$ is empty; if $\Lambda$ is virtually $\Z$, then $\Lambda$ is quasiconvex in $G$, its boundary $\partial \Lambda$ is two points, and the Cannon--Thurston map   exists and is an embedding.

Theorem~1.1 of \cite{HypHydra} says that $\Dist^{\Gamma_k}_{\Lambda_k} \succeq A_k$.  It combines with
Theorems~\ref{existence thm} and \ref{Distortion and Wildness}   to give  that the Cannon--Thurston map $\partial{\Lambda_k}\to\partial{\Gamma_k}$ has the property that for  $\delta$ incredibly small, $\varepsilon(\delta)$ is, by comparison, huge.   More precisely, we will prove in  Section~\ref{Wildness}:

\begin{cor}  \label{wildness thm}
Fix $k \geq 3$ and any visual metrics on $\partial{\Lambda_k}$ and $\partial{\Gamma_k}$.  The modulus of continuity for the Cannon--Thurston  map $\partial{\Lambda_k}\to\partial{\Gamma_k}$ for hyperbolic hydra satisfies
$$\varepsilon\left(\frac{1}{A_{k-1}(n)}\right)  \ \geq \  \frac{1}{n}$$ for all sufficiently large $n$.
In particular, for any primitive recursive function $f$ there exists a Cannon--Thurston map \emph{between hyperbolic groups} satisfying
$$\varepsilon\left(\frac{1}{f(n)}\right) \ \geq \ \frac{1}{n}$$ for all sufficiently large $n$. 
\end{cor}

(The final part of the corollary follows immediately,  since every primitive recursive function is dominated by some $A_k$, as we remarked earlier, citing \cite{Calude}.)

A detailed understanding of the Cannon--Thurston Map $\partial{\Lambda_k}\to\partial{\Gamma_k}$ appears hard to obtain.  Whilst  $\partial \Lambda_k$ is a Cantor set (as $\Lambda_k$ is free),  $\partial \Gamma_k$  is not so readily identified.  
(I.~Kapovich \& M.~Lustig \cite{KL} recently made advances in the understanding of Cannon--Thurston maps for certain free-by-cyclic groups, but  $\Lambda_k \leq \Gamma_k$ do not fall within the scope of their work.)
Here is what we can say about $\partial \Gamma_k$.

Splittings of hyperbolic free-by-cyclic groups $F\rtimes_\phi\Z$ are studied in \cite{KK} and \cite{Brink}, the former dealing with the case where $\phi$ is an irreducible hyperbolic free group automorphism, and the latter with $\phi$ a general hyperbolic free group automorphism. The argument  preceding Corollary~15 in \cite{KK} shows that any hyperbolic free-by-cyclic group has a one-dimensional boundary: the cohomological dimension of any (finitely generated free)-by-cyclic group is 2 (see e.g. \cite[pp.185--7]{Brown}), so \cite[Corollary 1.4(d)]{BM} implies $\partial \Gamma_k$ has dimension $2-1=1$. 

The argument of \cite[Corollary~15]{KK} shows that any hyperbolic free-by-cyclic group has connected, locally connected boundary without global cut points.  To see this, it suffices by \cite[Theorems~7.1 and 7.2]{KB} to check that $F\rtimes_\phi\Z$ is freely indecomposable, which is true for \emph{any} free group automorphism $\phi$. Indeed, the Bass--Serre tree $T$ for any graph of groups decomposition of $F\rtimes_\phi\Z$ admits a minimal action by the normal subgroup $F$ with quotient a finite graph by Grushko's Theorem.  This shows the edge stabilizers for the action of $F\rtimes_\phi\Z$ on $T$ are non-trivial, so the decomposition cannot be free. 

On the other hand, $\Gamma_k$ splits as an HNN-extension over $\Z$ for every $k$, so \cite[Theorem~7.2]{KB} implies $\partial\Gamma_k$ has local cut points.  Indeed, $\Gamma_1$ splits over $\Z$ as $\langle B, a_1 \rangle*_{\langle a_1^{-1} t^2ua_1=t^2v^{-1} \rangle}$,  where $B$ denotes the subgroup generated by all the defining generators other than  $a_0$ and $a_1$ (then $a_0$ appears as $t^{-1}a_1t$), and for $k \geq 2$,  $\Gamma_k$ splits as an  HNN-extension over $\Z$ with the stable letter $a_k$ conjugating $t$ to $t a^{-1}_{k-1}$.    
 
For additional background on Cannon--Thurston maps we recommend Mj's recent survey \cite{MjSurvey}.

\emph{The organization of this article.}  In Section~\ref{hyperbolic review} we give background on hyperbolic groups and their boundaries.  In Section~\ref{Mitra's lemma section} we define Cannon--Thurston maps  and prove an embellished version of a lemma of Mitra giving necessary and sufficient conditions for their existence.  In Section~\ref{CT for hydra} we review the construction of hyperbolic hydra groups and  prove Theorem~\ref{existence thm}.  In Section~\ref{Wildness} we prove Theorem~\ref{Distortion and Wildness} and Corollary~\ref{wildness thm}. 

\emph{Acknowledgment.}  We thank Mahan Mj  for conversations which fueled an interest in the relationship between subgroup distortion and Cannon--Thurston maps.  We are grateful to an anonymous referee for a careful reading and for valuable guidance on the Kleinian-groups literature.

\section{Hyperbolic groups and their boundaries} \label{hyperbolic review}

This section contains a brief account of some pertinent background.  More general treatments can be found in, for example, \cite{BH1, GhysdelaHarpe, KB, Short} and Gromov's foundational article \cite{Gromov4}.  

For a metric space $X$, the \emph{Gromov product} $(a \cdot b)_e$ (or $(a \cdot b)^X_e$ if there is  ambiguity) of $a,b \in X$ with respect to  $e \in X$   is  $$(a \cdot b)_e \ = \  \frac{1}{2}( d(a,e) + d(b,e) - d(a,b)).$$ 

One says $X$ is \emph{$(\delta)$-hyperbolic} when $$(a \cdot b)_e  \ \geq \  \min \set{(a \cdot c)_e, (b \cdot c)_e } - \delta$$ for all $e,a,b,c \in X$, and $X$ is hyperbolic when it is $(\delta)$-hyperbolic for some $\delta \geq 0$.   When $X$ is a geodesic space this is equivalent to  other standard definitions of  hyperbolicity (such as  $\delta$-thin or $\delta$-slim triangles),  although the $\delta$ involved may not agree.    

When $X$ is $(0)$-hyperbolic and geodesic---that is, an $\mathbb{R}$-tree---$(a \cdot b)_e$ is the distance from $e$ to the geodesic  between $a$ and $b$.   Correspondingly, in a $(\delta)$-hyperbolic geodesic space every pair of geodesics  from $e$ to $a$ and to  $b$, both parametrized by arc-length,  $6 \delta$-fellow-travel for approximately  $(a \cdot b)_e$ and  then diverge (by the \emph{insize}  characterization of hyperbolicity of \cite[page~408]{BH1}).    Indeed:
\begin{lemma} \label{6 delta}
In a $(\delta)$-hyperbolic geodesic metric space,  for every geodesic $[a,b]$ connecting $a$ and $b$
$$| d(e, [a,b]) - (a \cdot b)_e |  \ \leq  \ 6 \delta.$$   
\end{lemma}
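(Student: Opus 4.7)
The inequality splits into two one--sided bounds, only one of which actually uses hyperbolicity. First, the lower bound $d(e,[a,b]) \geq (a \cdot b)_e$ requires no hypothesis on $X$ beyond being a geodesic metric space: if $c$ is any point on the geodesic $[a,b]$, then $d(a,c)+d(c,b)=d(a,b)$, and applying the triangle inequality to the triples $(e,a,c)$ and $(e,b,c)$ and adding yields
\[
d(e,a)+d(e,b) \ \leq \ 2 d(e,c) + d(a,b),
\]
which rearranges to $(a\cdot b)_e \leq d(e,c)$. Taking the infimum over $c \in [a,b]$ finishes this direction; in fact it gives the stronger statement $(a\cdot b)_e \leq d(e,[a,b])$ with no slack.

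For the upper bound $d(e,[a,b]) \leq (a\cdot b)_e + 6\delta$, I would invoke the insize characterization of hyperbolicity already referenced in the paragraph preceding the lemma (\cite[page~408]{BH1}). Fix geodesics $[e,a]$ and $[e,b]$ and form a geodesic triangle together with the given $[a,b]$. The standard identification of the internal points of a geodesic triangle in terms of Gromov products says that the point $q\in[e,a]$ at arc--length $(a\cdot b)_e$ from $e$ is paired with a specific point $p\in[a,b]$, and insize bounds $d(p,q)$. Under the authors' normalization this bound is $6\delta$---this is exactly the ``$6\delta$--fellow--travel'' constant stated just above the lemma---so
\[
d(e,[a,b]) \ \leq \ d(e,p) \ \leq \ d(e,q) + d(q,p) \ \leq \ (a\cdot b)_e + 6\delta.
\]
Combining the two inequalities gives the claim.

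The only point requiring care, and hence the only real ``obstacle,'' is tracking the constant attached to the insize inequality: different conventions for the four--point definition of Gromov hyperbolicity yield $4\delta$, $6\delta$, or $8\delta$ when passed through to the insize of a geodesic triangle. Rather than redo the bookkeeping from scratch, I would simply quote \cite{BH1} and match the ``$6\delta$--fellow--travel'' constant the authors have already committed to in the preceding paragraph. Everything else is the triangle inequality.
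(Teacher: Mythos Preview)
Your proof is correct and follows essentially the same approach as the paper: both invoke the insize bound from \cite{BH1} (Proposition~1.22 there) to get the $6\delta$ constant, after which the inequality is immediate. Your version is slightly more explicit---in particular, you observe that the lower bound $(a\cdot b)_e \leq d(e,[a,b])$ holds in any geodesic space by a bare triangle--inequality computation, whereas the paper simply cites the $6\delta$--thin condition from \cite{BH1} alongside the insize bound and says the claimed inequality follows---but this is a minor elaboration rather than a different route.
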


\begin{proof}
See \cite{BH1}: the proof of Proposition~1.22 on page 411 shows that \emph{insizes}  of geodesic triangles are at most $6\delta$, and  the proof of Proposition~1.17(3)$\implies$(2) on page 409 shows that all geodesic triangles are \emph{$6\delta$-thin}, and the claimed inequality follows.
\end{proof}

The  \emph{(Gromov-) boundary} $\partial X$ of a hyperbolic metric space $X$ is defined with reference to, but is in fact independent of, a  point $e \in X$.  It is the set of equivalence classes of sequences $(a_n)$ in $X$ such that  $(a_m \cdot a_n)_e \to  \infty$ as $m,n \to \infty$, where two such sequences  $(a_n)$ and $(b_n)$ are equivalent when  $(a_m \cdot b_n)_e \to  \infty$ as $m,n \to \infty$.  Indeed, they are equivalent when  $(a_n \cdot b_n)_e \to  \infty$ as $n \to \infty$ since $$(a_n\cdot b_m)_e  \ \geq \ \min\{(a_n\cdot b_n)_e,(b_m\cdot b_n)_e\} - \delta$$ by $(\delta)$-hyperbolicity.  
Denote the equivalence class of  $(a_n)$ by $\lim a_n$.  

 When $X$ is a \emph{geodesic} hyperbolic metric space, there are equivalent definitions of $\partial X$, such as   $\partial X$  is the set of equivalence classes of geodesic rays emanating from $x$, where two such rays are equivalent when they stay uniformly close.   
 The condition $(a_m \cdot a_n)_e \to  \infty$ is what makes a  sequence $(a_n)$ ray-like, and the condition $(a_m \cdot b_n)_e \to  \infty$ corresponds to uniform closeness.  
  
Extend the Gromov product to $\wb{X} : = X \cup \partial X$ by 
$$(a  \cdot b )_e \ = \  \sup \liminf_{ m,n \to \infty} (a_m \cdot b_n)_e$$ 
where the $\sup$ is over all sequences $(a_m)$ and $(b_n)$ in $X$ representing (when in $\partial X$) or tending to (when in $X$) $a$ and $b$, respectively.    (The  ``$\sup \liminf$''  is  necessary---see   \cite[page 432]{BH1}.)   

We note, for \ref{3} in the following lemma, that in a proper geodesic hyperbolic metric space $X$, each pair of distinct points  $a,b \in \partial X$ is joined by a bi-infinite geodesic line $[a,b]$ (Lemma~3.2 on page 428 of \cite{BH1}).

\begin{lemma}
\label{BH1 433 Lemma}
Suppose $X$ is a proper  geodesic  $(\delta)$-hyperbolic metric space.
\vspace*{-6pt}
\begin{enumerate}
\renewcommand{\theenumi}{\textup{(\arabic{enumi})}}
\addtocounter{enumi}{-1}
\item If $x,y\in\overline{X}$ and $e\in X$, then there exist sequences $(x_n)$ and $(y_n)$ in $X$ with $x=\lim x_n$, $y=\lim y_n$, and $(x\cdot y)_e=\lim_n(x_n\cdot y_n)_e$. \label{0}
\item \label{BH1 433 Lemma 1} If $a,b,c\in\ol{X}$ and $e\in X$, then $(a\cdot b)_e \ \geq \ \min\{(a\cdot c)_e,(c\cdot b)_e\}-2\delta$.    \label{1}
\item \label{BH1 433 Lemma 2} If $a,b\in X$ and $c\in\partial X$, then $|d(a,b)-(a\cdot c)_b-(b\cdot c)_a| \ \leq \ \delta$.   \label{2}
\item \label{BH1 433 Lemma 3} If $e \in X$ and $[a,b]$ is any geodesic  joining any $a,b\in\partial X$, then $|d(e,[a,b])-(a\cdot b)_e| \ \leq \  8\delta$.   \label{3}
\end{enumerate}
\end{lemma}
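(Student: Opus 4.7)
All four parts are standard consequences of the $\sup\liminf$ definition of the extended Gromov product combined with the four--point condition, in the spirit of \cite{BH1}.  For \ref{0}, I would give a diagonal argument.  By definition of $(x\cdot y)_e$, for each $k$ I can choose sequences $(x_n^{(k)})$ and $(y_n^{(k)})$ (representing $x,y$ if on the boundary, otherwise converging to them) such that $\liminf_{m,n}(x_m^{(k)}\cdot y_n^{(k)})_e$ is within $1/k$ of $(x\cdot y)_e$.  A Cantor diagonal extraction, combined with the fact that along any single sequence representing a boundary point one has $(x_m^{(k)}\cdot x_n^{(k)})_e\to\infty$, produces single sequences $(x_k),(y_k)$ with $(x_k\cdot y_k)_e\to(x\cdot y)_e$.

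For \ref{1}, the plan is to apply \ref{0} to each of the three pairs $(a,b)$, $(a,c)$, $(b,c)$ and then reconcile the three resulting pairs of sequences into a single triple $(a_n),(b_n),(c_n)$ along which all three diagonal Gromov products approximate the target values.  Reconciling two sequences representing the same boundary point $c$ costs one $\delta$ (apply the pointwise four--point condition and use that $(c_n\cdot c_n')_e\to\infty$), so after reconciliation and one further application of the pointwise four--point inequality to the triple $(a_n,b_n,c_n)$, one arrives at the claimed $2\delta$ slack.

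For \ref{2}, the key exact identity $d(a,b) = (a\cdot c)_b + (b\cdot c)_a$ holds for every $c\in X$ by direct expansion of the Gromov product.  For $c\in\partial X$, using \ref{0} I would choose $c_n\to c$ achieving $(a\cdot c_n)_b\to(a\cdot c)_b$; then $(b\cdot c_n)_a$ converges to $d(a,b)-(a\cdot c)_b$, and hence $(b\cdot c)_a\geq d(a,b)-(a\cdot c)_b$.  The reverse inequality follows by applying the same argument with $a$ and $b$ swapped and possibly a different representing sequence, with the $\delta$--error absorbing the mismatch between the two choices of sequence via (a pointwise version of) \ref{1}.

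For \ref{3}, the plan is to approximate the bi--infinite geodesic $[a,b]$ by geodesic segments $[a_n,b_n]$ where $(a_n),(b_n)$ are sequences furnished by \ref{0}; properness of $X$ together with an Arzel\`a--Ascoli argument ensures that after passing to a subsequence $[a_n,b_n]\to[a,b]$ uniformly on compact sets, so that $d(e,[a_n,b_n])\to d(e,[a,b])$.  Applying Lemma~\ref{6 delta} to each $[a_n,b_n]$ gives $|d(e,[a_n,b_n])-(a_n\cdot b_n)_e|\leq 6\delta$, while the $(a_n\cdot b_n)_e$ only tend to a value within $2\delta$ of $(a\cdot b)_e$ by \ref{1}, summing to the stated $8\delta$.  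I expect the main obstacle to be the careful bookkeeping in \ref{3}: one must confirm that the $[a_n,b_n]$ really do converge so that the nearest--point projections $d(e,[a_n,b_n])$ pass to $d(e,[a,b])$, and that the $\sup\liminf$ nature of the extended Gromov product is handled simultaneously with the geodesic approximation.
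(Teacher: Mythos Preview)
Your plans for \ref{0}, \ref{1}, and \ref{2} match the paper's treatment (the paper simply cites \ref{0} and \ref{1} from \cite{BH1}, and your argument for \ref{2} is the paper's: pick two sequences $c_n,c_n'$ realizing $(a\cdot c)_b$ and $(b\cdot c)_a$ respectively, reconcile them via one application of the four--point condition at cost $\delta$, then invoke the exact pointwise identity $d(a,b)=(a\cdot c_n')_b+(b\cdot c_n')_a$).

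For \ref{3} your route differs, and the difference is exactly the obstacle you flag.  You want to take $(a_n),(b_n)$ from \ref{0}, choose geodesics $[a_n,b_n]$, and argue via Arzel\`a--Ascoli that they converge to the given $[a,b]$; but Arzel\`a--Ascoli only yields subsequential convergence to \emph{some} bi--infinite geodesic from $a$ to $b$, not the specified one, so you would need an additional comparison of two such geodesics and the constant--tracking becomes awkward.  The paper sidesteps this entirely: it takes $a_n\to a$ and $b_n\to b$ \emph{along the given geodesic $[a,b]$ itself}, so that $[a_n,b_n]$ is a subsegment of $[a,b]$ and $d(e,[a_n,b_n])=d(e,[a,b])$ for large $n$ with no limiting argument needed.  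These particular $a_n,b_n$ need not realize $(a\cdot b)_e$, so the paper also chooses a second pair $(a_n'),(b_n')$ via \ref{0} that does, and shows $|(a_n\cdot b_n)_e-(a_n'\cdot b_n')_e|\leq 2\delta$ by two applications of the four--point condition.  This yields $6\delta+2\delta=8\delta$ directly, with no compactness argument.  Your accounting ($6\delta$ from Lemma~\ref{6 delta} plus $2\delta$) is the same decomposition, but you have attached the two roles---``points controlling $d(e,[a,b])$'' versus ``sequences realizing $(a\cdot b)_e$''---to a single pair of sequences; separating them as the paper does removes the difficulty you anticipated.
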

\begin{proof}  \ref{0}  and  \ref{1} are parts  3  and  4  of \cite[page 433, Remark 3.17]{BH1}.   (Alternatively, see parts 3 and 5 of  \cite[Lemma~4.6]{Short} but note that there $\inf \liminf$ is used in place of $\sup \liminf$ and so the constants differ.) 

For \ref{2}, using \ref{0} take sequences $c_n,c_n'$ both approaching $c$ such that $(a\cdot c)_b=\lim(a\cdot c_n)_b$ and $(b\cdot c)_a=\lim(b\cdot c_n')_a$.  Now, $(\delta)$-hyperbolicity yields $(a\cdot c_n)_b\geq\min\{(a\cdot c_n')_b,(c_n\cdot c_n')_b\}-\delta$. As $n\to\infty$, we have $(c_n\cdot c_n')_b\to\infty$, but $(a\cdot c_n')_b$ is bounded above by $d(a,b)$.  So   $(a\cdot c_n')_b\leq(a\cdot c_n)_b+\delta$ for all sufficiently large $n$. Interchanging the roles of $c_n$ and $c_n'$ we find $|(a\cdot c_n)_b-(a\cdot c_n')_b|\leq\delta$ for all sufficiently large $n$.  Hence:
\[
|d(a,b)-(a\cdot c_n)_b-(b\cdot c_n')_a| \ \leq \ \delta+|d(a,b)-(a\cdot c_n')_b-(b\cdot c_n')_a| \ =  \  \delta+|0| \  = \ \delta 
\]
for all sufficiently large $n$.  Taking the limit as $n\to\infty$ gives the result.

For \ref{3} (cf.\ Exercise 3.18(3)~\cite[page~433]{BH1}),  choose sequences $a_n\to a$ and $b_n\to b$ along $[a,b]$.  Also choose $a_n',b_n'\in X$ as in (0) so that $(a\cdot b)_e=\lim(a_n'\cdot b_n')_e$.   For large enough $n$, the closest point of $[a,b]$ to $e$ lies on $[a_n,b_n]$, so   
\begin{equation}
|d(e,[a,b])-(a_n\cdot b_n)_e| \ = \ |d(e,[a_n,b_n])-(a_n\cdot b_n)_e|  \ \leq \ 6\delta,  \label{6delta}  
\end{equation}
with the inequality being by Lemma~\ref{6 delta}.  By the $(\delta)$-hyperbolicity condition,
\begin{align*}
(a_n\cdot b_n)_e \ & \geq \ \min\{(a_n\cdot a_n')_e,(a_n'\cdot b_n')_e,(b_n\cdot b'_n)_e\}-2\delta, \textup{  and} \\
(a'_n\cdot b'_n)_e \ & \geq \ \min\{(a_n\cdot a_n')_e,(a_n\cdot b_n)_e,(b_n\cdot b'_n)_e\}-2\delta. 
\end{align*}
 As $n \to \infty$ both  $(a_n\cdot a'_n)_e \to \infty$ and $(b_n\cdot b'_n)_e \to \infty$, but   $\limsup(a_n\cdot b_n)_e$ and $\limsup(a_n'\cdot b_n')_e$ are bounded above by $(a\cdot b)_e+1$ (else, passing to subsequences, we can assume $(a_n\cdot b_n)_e>(a\cdot b)_e+ 1/2$ for all $n$, and so $\liminf(a_n\cdot b_n)_e\geq(a\cdot b)_e+1/2$ contrary to the definition  of $(a\cdot b)_e$). So these two inequalities together  give $|(a_n\cdot b_n)_e-(a_n'\cdot b_n')_e|\leq 2\delta$ for all sufficiently large $n$.  Combining this with \eqref{6delta} gives the result.   
 \end{proof}

\emph{Visual metrics} are natural  metrics on the boundary $\partial X$ of a $(\delta)$-hyperbolic space $X$.  Their
essence is that $a,b \in \partial {X}$ are close when geodesics from a basepoint $e \in X$ to $a$ and from $e$ to $b$ fellow travel for a long distance.  One might hope that if $r >1$, then  $d(a,b) =  r^{-(a \cdot b)_e}$ would define such a metric, but unfortunately, as such, transitivity can fail.  Instead, say that a metric $d$ on  $\partial X$ is a \emph{visual metric} with \emph{visual parameter} $r > 1$ when there exist $k_1, k_2 >0$ such that for all $a,b \in \partial {X}$, 
\begin{equation}
k_1 r^{-(a \cdot b)_e} \ \leq \  d(a,b)  \ \leq  \  k_2 r^{-(a \cdot b)_e}. \label{visual eq}
\end{equation}  

\begin{lemma} \label{visual}
Suppose $X$ is a $(\delta)$-hyperbolic space, $r>1$, and  $e \in X$ is the base point with respect to which $\partial X$ is defined.   Then there is a visual metric on $\partial {X}$ with parameter $r$.   Moreover, any two visual metrics $d$ and $d'$ on $\partial {X}$  (perhaps with different $r$ and  $e$) are H\"older-equivalent in that  there exists $\alpha>0$ such that  the identity map $(\partial{X},d)\to(\partial{X},d')$ is $\alpha$-H\"older and its inverse is $(1/ \alpha)$-H\"older.    In particular, the \emph{visual topology} on $\partial X$ is independent of these choices. 
\end{lemma}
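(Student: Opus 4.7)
The plan splits into the existence and H\"older-equivalence claims.

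For existence, I would follow Gromov's standard approach. Define a candidate quasi-metric $\rho(a,b) := r^{-(a\cdot b)_e}$ on $\partial X$ (with $\rho(a,a):=0$). Part~\ref{1} of Lemma~\ref{BH1 433 Lemma} immediately yields
\[
\rho(a,c) \ \leq \ r^{2\delta}\,\max\{\rho(a,b),\rho(b,c)\},
\]
so $\rho$ is a quasi-ultrametric but not yet a genuine metric when $r^{2\delta}>1$. To smooth $\rho$ out, replace it by its chain-infimum
\[
d(a,b) \ := \ \inf\left\{\sum_{i=0}^{n-1}\rho(a_i,a_{i+1}) \ : \ n\geq 1,\ a=a_0,a_1,\ldots,a_n=b \in \partial X\right\}.
\]
Symmetry and the triangle inequality for $d$ are built into the definition, and the trivial length-one chain gives $d(a,b)\leq\rho(a,b)$, i.e. $k_2=1$ in \eqref{visual eq}.

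The crux, and the main obstacle, is the matching lower bound $d(a,b) \geq k_1\rho(a,b)$. This is a Frink-type chain lemma: provided the quasi-ultrametric constant is strictly less than $2$ (and, by first rescaling the metric on $X$ to shrink the effective $\delta$, I may assume $r^{2\delta}<2$), the chain infimum stays bounded below by a positive constant multiple of $\rho$. I would prove this by induction on chain length: given a chain $a_0,\ldots,a_n$, split at the index $j$ where the partial sum first exceeds half the total weight, apply the inductive hypothesis to $a_0,\ldots,a_j$ and $a_j,\ldots,a_n$, then recombine via the quasi-ultrametric inequality. The smallness of $r^{2\delta}$ is precisely what keeps the inductive constant bounded away from zero. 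Once this step is in hand, $d$ is automatically nondegenerate and both sides of \eqref{visual eq} hold.

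For the H\"older equivalence, let $d,d'$ be visual metrics on $\partial X$ with parameters $r,r'$ and basepoints $e,e'$, respectively. A direct computation from the definition of the Gromov product shows $|(a\cdot b)_e - (a\cdot b)_{e'}| \leq d_X(e,e')$ for $a,b\in X$, and by passing to sequences via part~\ref{0} of Lemma~\ref{BH1 433 Lemma} the same bound extends to $a,b\in\partial X$. Hence the Gromov products at $e$ and $e'$ agree up to an additive constant, and I may compare $d$ and $d'$ against a single Gromov product up to a multiplicative error. Taking logarithms in \eqref{visual eq} for each of $d$ and $d'$ and eliminating the common Gromov product then yields
\[
\log d'(a,b) \ = \ \tfrac{\log r'}{\log r}\,\log d(a,b) \ + \ O(1),
\]
so $d'(a,b)$ is sandwiched by constant multiples of $d(a,b)^{\log r'/\log r}$. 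Setting $\alpha := \min\{\log r/\log r',\,\log r'/\log r\} \leq 1$ gives the asserted bi-H\"older equivalence.
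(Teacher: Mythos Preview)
The paper does not actually prove this lemma: it merely cites \cite[Proposition~3.21, page~435]{BH1} for existence and \cite[Theorem~2.18]{KB} for the H\"older equivalence. Your proposal is essentially a sketch of the standard arguments found in those references, so in that sense you are aligned with the paper's approach.

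That said, there is a genuine gap in your existence argument. You claim that ``by first rescaling the metric on $X$ to shrink the effective $\delta$, I may assume $r^{2\delta}<2$.'' This does not work. If you scale the metric on $X$ by $\lambda$, then $\delta$ becomes $\lambda\delta$, but the Gromov product also scales: $(a\cdot b)_e \mapsto \lambda(a\cdot b)_e$. A visual metric with parameter $r'$ on the rescaled space satisfies $d\asymp (r')^{-\lambda(a\cdot b)_e}$, which is a visual metric with parameter $(r')^{\lambda}$ on the original space. To hit the target parameter $r$ you need $r'=r^{1/\lambda}$, and then the Frink condition $(r')^{2\lambda\delta}<2$ reduces exactly to $r^{2\delta}<2$. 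The constraint is scale-invariant, as it must be.

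In fact the lemma as stated in the paper is slightly loose on this point: visual metrics with parameter $r$ only exist for $r$ sufficiently close to $1$ (depending on $\delta$), which is what \cite[Proposition~3.21]{BH1} actually proves. This does not affect any application in the paper, since only the existence of \emph{some} visual metric and the H\"older equivalence of any two are ever used. Your H\"older-equivalence argument is correct in substance; the exponent is $\alpha=\log r'/\log r$ (or its reciprocal, depending on direction), not the minimum of the two.
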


 The existence claim is \cite[Proposition~3.21, page 435]{BH1}.  H\"older-equivalence is known (see \cite[Theorem~2.18]{KB}), and follows, in this generality, immediately from the definition of visual metric and the fact that $(a\cdot b)_e\geq (a\cdot b)_{e'}-d(e,e')$.

We will need  that $\overline{X} := X\cup\partial X$ is a compactification of $X$:
\begin{lemma}\label{compactification}
If $X$ is a proper $(\delta)$-hyperbolic geodesic metric space, then there is a unique compact metrizable topology on $\overline{X}:=X\cup\partial X$ such that: the inclusions of $X$ and of $\partial X$ are homeomorphic onto their images, $\partial X$ is closed, and a sequence $x_n$ in $X$ converges to $x\in\partial X$ if and only if $(x_n)$ is in the equivalence class  $x$.
\end{lemma}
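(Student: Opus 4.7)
The plan is to follow the standard construction from \cite[III.H.3]{BH1}: topologize $\overline{X} = X \cup \partial X$ via neighborhoods built from the extended Gromov product, check the topological assertions, and deduce metrizability via Urysohn's theorem from regularity plus second countability. Fix a basepoint $e \in X$. For $x \in X$, take the usual open $d_X$--balls as a neighborhood basis; for $\xi \in \partial X$ and $k \geq 1$, take $V(\xi, k) := \set{y \in \overline{X} \mid (\xi \cdot y)_e > k}$.

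The first step is to check these give a well--defined Hausdorff topology in which $X \hookrightarrow \overline{X}$ and $\partial X \hookrightarrow \overline{X}$ are homeomorphisms onto their images and in which $\partial X$ is closed.  Openness of each $V(\xi, k)$ and the Hausdorff property both follow from Lemma~\ref{BH1 433 Lemma}(1): given $y \in V(\xi, k) \cap \partial X$, any $z \in V(y, k + 2\delta + 1)$ satisfies $(\xi \cdot z)_e \geq \min\{(\xi \cdot y)_e, (y \cdot z)_e\} - 2\delta > k$; for $y \in V(\xi, k) \cap X$ one uses instead the elementary bound $(\xi \cdot y)_e \leq d_X(y, e)$ to see that a small $d_X$--ball about $y$ lies in $V(\xi, k)$.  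Closedness of $\partial X$ is immediate since $X$ is open in $\overline{X}$ by construction.  That the inclusion of $\partial X$ is a homeomorphism onto its image follows by comparison with the visual metric of Lemma~\ref{visual}.

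The second step is compactness, by an Arzel\`a--Ascoli argument. Given a sequence $(x_n)$ in $\overline{X}$, choose for each $n$ a unit--speed geodesic $\gamma_n$ from $e$ to $x_n$ (a ray when $x_n \in \partial X$, which exists since $X$ is proper and hyperbolic).  After passing to a subsequence, either the lengths $L_n$ of $\gamma_n$ stay bounded---in which case properness of $X$ yields a further subsequence with $x_n$ converging in $X$---or $L_n \to \infty$, and a diagonal extraction using properness and Arzel\`a--Ascoli gives a subsequence along which $\gamma_n$ converges uniformly on compact subsets of $[0, \infty)$ to a unit--speed geodesic ray $\gamma$.  Standard Gromov--product computations (using Lemma~\ref{6 delta} and the fact that $(\gamma(t) \cdot x_n)_e$ is within $O(\delta)$ of $t$ for $n$ large depending on $t$) then give $x_n \to [\gamma] \in \partial X$.

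Finally, metrizability comes from Urysohn: $\overline{X}$ is regular (a separation argument paralleling the openness proof) and second countable (since $X$ is separable, being proper, and countably many $V(\xi, k)$ with $\xi$ ranging in a countable dense subset of $(\partial X, d)$ and $k \in \mathbb{N}$ suffice).  Compactness plus metrizability implies sequential compactness.  The convergence characterization in the last sentence of the lemma is then by design: for $(x_n)$ in $X$, $x_n \to \xi$ in $\overline{X}$ says $(\xi \cdot x_n)_e \to \infty$, which by Lemma~\ref{BH1 433 Lemma}(1) applied with an intermediate representing sequence is equivalent to $(x_n)$ lying in the equivalence class of $\xi$.  The main obstacle is the compactness step, where extracting the geodesic limit and certifying that $(x_n)$ converges to the correct boundary point requires simultaneous control of the $d_X$--geometry (via Arzel\`a--Ascoli) and of the Gromov products at infinity (via Lemma~\ref{6 delta}).
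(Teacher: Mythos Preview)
Your proposal is correct and follows essentially the same route as the paper: both defer to the construction in \cite[III.H.3]{BH1}, with the paper simply citing III.H.3.7, III.H.3.13, and III.H.3.18(4) there, while you unpack those arguments into an explicit outline (topology via Gromov--product neighborhoods, Arzel\`a--Ascoli for compactness, Urysohn for metrizability).
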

\begin{proof}
Uniqueness follows from the fact that for metrizable spaces, the topology is determined
by knowledge of which sequences converge and to which points they converge.  For a sequence $x_n$ in $\overline{X}$, $x_n\to x\in X$ in the topology of $\overline{X}$ precisely when all but finitely many $x_n\in X$ and $x_n\to x$ in the topology of $X$.  Also, $x_n\to x\in\partial X$ in the topology of $\overline{X}$ precisely when: (i) the subsequence of $x_n$ consisting of points in $\partial X$ is finite or converges to $x$ in the topology of $\partial X$, AND (ii) the subsequence of $x_n$ consisting of points in $X$ is finite or represents the equivalence class $x$.

The topology on $\overline{X}$ is constructed and sequential compactness is proved in  \cite[page~430,  III.H.3.7]{BH1} and  metrizability in  \cite[page~433 III.H.3.18(4)]{BH1}.   For metric spaces, sequential compactness is equivalent to compactness.
The agreement with the topology on $\partial X$ coming from the visual metric is \cite[page~435 III.H.3.21]{BH1}. 
The characterization of convergent sequences
follows from \cite[page~431, III.H.3.13]{BH1}.  
\end{proof}

Given a finitely generated group $G$ with finite generating set $A$, one forms the Cayley graph $C_A(G)$ with vertex set $G$ and
edge set $\{\{v,w\}\,|\,v,w\in G,\, vw^{-1}\in A\cup A^{-1}\}$.  The graph metric on $C_A(G)$ induces the \emph{word metric} on the vertex set $G$ and
$G$ is called a $(\delta)$-\emph{hyperbolic group} if the metric space $G$ is $(\delta)$-hyperbolic.  

\begin{lemma}\label{hyperbolic group compactification}
If $G$ is a hyperbolic group, then there is a unique compact metrizable topology on $\overline{G}:=G\cup\partial G$ such that: the inclusions of $G$ and of $\partial G$ are homeomorphic onto their images, $\partial G$ is closed, and a sequence $g_n$ in $G$ converges to $g\in\partial G$ if and only if $(g_n)$ is in the equivalence class $g$.
\end{lemma}
\begin{proof}
In this case, $C_A(G)$ is a proper hyperbolic geodesic metric space, to which Lemma~\ref{compactification} applies.
One may identify $\partial G$ with $\partial C_A(G)$ as visual metric spaces and thus identify $\overline{G}:=G\cup\partial G$ with a subspace of $\overline{C_A(G)}=C_A(G)\cup\partial G$, and so endow $\overline{G}$ with the subspace topology. 
The stated properties of $\overline{G}$ are now a consequence of Lemma~\ref{compactification}, and uniqueness follows as it does in the proof of Lemma~\ref{compactification}.
\end{proof}

Changing the finite generating set $A$ induces a quasi-isometry and so does not affect whether $G$ is hyperbolic  (Theorem~1.9 of  \cite[page~402,  III.H.1]{BH1}), nor does it affect the topology on $\partial{G}$ (Theorem~3.9 of  \cite[page~430,  III.H.3.7]{BH1}) or $\overline{G}$ (Lemma~\ref{lemma:Mitra}\ref{exists and cts}$\iff$\ref{extension} below). 

\section{Cannon--Thurston maps and Mitra's Lemma} \label{Mitra's lemma section}

Given two hyperbolic groups $\Lambda$ and $\Gamma$ and an injective homomorphism $\imath:\Lambda\to\Gamma$, one may ask if it extends to a continuous map $\ibar:\overline{\Lambda} \to \overline{\Gamma}$.  Equivalently (Lemma~\ref{lemma:Mitra}), one may ask whether $\imath$ induces a well-defined map $\ihat:\partial\Lambda\to\partial\Gamma$ sending $[(g_n)]$ to $[(\imath g_n)]$.  When one (and hence both) exist, $\ihat$ is the restriction of $\ibar$ to $\partial\Lambda$ and is called the \emph{Cannon--Thurston map}.  The next section will prove the existence of Cannon--Thurston maps for the heavily distorted free subgroups of hyperbolic hydras.  This section deals with general tools for showing the existence of Cannon--Thurston maps. 

For now, let us deal with a more general setting.  Consider a map $f:Y\to X$ where $(X, d_X)$ is $(\delta_X)$--hyperbolic and $(Y, d_Y)$ is $(\delta_Y)$--hyperbolic.  We will assume $X$ and $Y$ are proper and geodesic, or else Lemma~\ref{compactification} will not provide us a topology on $\overline{X}$ and $\overline{Y}$ (though one could still ask whether $(x_n)\to(f(x_n))$ gives a well-defined continuous map $\partial X\to\partial Y$). The following lemma is an embellished version of Mitra's criterion for the Cannon--Thurston map  to exist (\cite[Lemma 2.1]{CTnormal} and \cite[Lemma 2.1]{CTtrees}).   

Our notation is that $B_X(e,R)  =  \set{x \in X \mid d_X(e,x) <r }$ and $\wb{B}_X(e,R) =  \set{x \in X \mid d_X(e,x) \leq r }$, and we write $\gamma = [x,y]_X$ to mean \emph{$\gamma$ is a geodesic in $X$ from $x$ to $y$}.   The metrics on $\partial X$ and $\partial Y$ implicit in this lemma are any visual metrics $d_{\partial X}$ and $d_{\partial Y}$.

The additional hypothesis for \ref{Baker M} can be removed if $\delta_Y$, the constant of hyperbolicity for $Y$, is zero; for $\delta_Y>0$, we do not know whether \ref{Baker M} is equivalent to or strictly weaker than \ref{exists}--\ref{Mitra M} in its absence.   An inclusion map of a subgroup into an ambient group is Lipschitz when both have word metrics coming from some finite generating sets, and so it is satisfied in that setting.

We remark that the equivalence \ref{exists}--\ref{product M} also follows for $X$ and $Y$ hyperbolic groups and $f:X\to Y$ an injective homomorphism, with the same proof (replacing Lemma~\ref{compactification} with Lemma~\ref{hyperbolic group compactification}).

\begin{lemma}
\label{lemma:Mitra}
Suppose $(X, d_X)$ and $(Y, d_Y)$ are infinite proper geodesic hyperbolic metric spaces, and $f:Y \to X$ is a proper map.    
Fix a basepoint $e \in Y$.  Define $M, M', M'' : [0,\infty) \to [0,\infty)$ by 
\begin{align*}
M(N) & : = \ \inf \set{ (f(x) \cdot f(y))^X_{f(e)}  \mid  x, y \in Y \textup{ and } (x \cdot y)^Y_e \geq N }, \\
M'(N) & : = \  \inf \set{  d_X(f(e), \gamma)   \mid \gamma = [f(x),f(y)]_X \textup{ for some }   [x, y]_Y \textup{ in } Y \ssm B_Y(e,N)    }, \\
M''(N) & : = \ \inf \set{  d_X(f(e), \gamma)   \mid \gamma  = [f(z),f(y)]_X \textup{ for some } z \textup{ on some } [e,y]_Y \textup{ with } d_Y(e,z) \geq N   }.  
\end{align*}  
The following are equivalent:
\renewcommand{\theenumi}{\textup{(\alph{enumi})}}
\vspace*{-6pt}
\begin{enumerate}
 \item   $(a_n)\mapsto(f(b_n))$ induces a well-defined function $\fhat:\partial{Y}\to\partial{X}$. \label{exists}
  \item  $(a_n)\mapsto(f(b_n))$ induces a well-defined,  continuous map $\fhat:\partial{Y}\to\partial{X}$.   \label{exists and cts}  
	\item  There exists a continuous extension $\overline{f}:\overline{Y}\to\overline{X}$ of $f$.  \label{extension}
 \item  $M(N)\to\infty$ as $N\to\infty$. \label{product M} 
 \item  $M'(N)\to\infty$ as $N\to\infty$. \label{Mitra M}
\end{enumerate}
\vspace*{-6pt}
Moreover, if  $\sup\{ d_X(f(x), f(y)) \mid d_Y(x,y) \leq r \} < \infty$ for all $r \geq 0$, then these are also equivalent to
\renewcommand{\theenumi}{\textup{(\alph{enumi})}}
\vspace*{-6pt}
\begin{enumerate}
\addtocounter{enumi}{5}
  \item  $M''(N)\to\infty$ as $N\to\infty$. \label{Baker M}
\end{enumerate} 
 \end{lemma}

\begin{proof}
Each of $M(N)$, $M'(N)$, and $M''(N)$ is a non-decreasing function, so is either bounded or tends to $\infty$ as $N \to \infty$.

That \ref{exists and cts}$\implies$\ref{exists} is immediate.  

Here is why \ref{exists}$\implies$\ref{product M}. Suppose  $M(N)\leq C$ for all $N$.
So there are sequences $(p_n)$ and $(q_n)$   in $Y$ with $(p_n\cdot q_n)^Y_{e}\to\infty$ but $( f(p_n) \cdot f(q_n))^X_{f(e)} \leq C$ for all $n$.   As $Y \cup \partial Y$ is sequentially compact by Lemma~\ref{compactification}, 
both $(p_n)$ and $(q_n)$ have  subsequences which converge in $Y\cup\partial Y$.   But the condition $(p_n\cdot q_n)^Y_{e}\to\infty$ precludes any such subsequence from converging in $Y$, so those subsequences converge  to points  in $\partial Y$, indeed to the same point.

Next we  prove \ref{product M}$\implies$\ref{exists and cts}.  Suppose sequences $(p_n)$ and $(q_n)$  in $Y$ both represent  the same point in $\partial Y$. Then $(p_n \cdot q_n)_e^Y\to\infty$ as $n\to\infty$, and so $(f(p_n) \cdot f(q_n))_{f(e)}^X\to\infty$, since $M(N)\to\infty$ as $N\to\infty$.   Thus if $(f(p_n))$ and $(f(q_n))$ represent  points in $\partial X$, then those points are the same.

So, to prove $\fhat$ is well-defined, it suffices to show that if a sequence $(a_n)$ in $Y$ represents a point in $\partial Y$ (and so $d_Y(e,a_n)     \to\infty$, since $d_Y(e,a_n)\geq(a_n\cdot a_m)_e^Y$), then $(f(a_n))$ represents a point in $\partial X$.  Indeed, it suffices to show that a \emph{subsequence} of $(f(a_n))$ represents a point in $\partial X$.   
By sequential compactness of $X\cup\partial X$  (Lemma~\ref{compactification}), 
a  subsequence of $(f(a_n))$ converges. If it converges to a point in $X$, then a subsequence of $(f(a_n))$ is in some compact (by properness of $X$) ball $\wb{B}_X(e,R)$. 
But then, by properness of $f$, a subsequence of  $(a_n)$ would be contained in some ball $B_Y(e,R')$, which would contradict  $d_Y(e,a_n)\to\infty$.   So some subsequence of $(f(a_n))$ converges to (that is, \emph{represents}---see Lemma~\ref{compactification}) 
a point in $\partial X$.

To establish continuity, suppose $p,q\in\partial Y$.   
By definition of the visual metrics  $d_{\partial X}$ and $d_{\partial Y}$,  
there exist constants $r, s >1$ and $k,l >0$ (independent of $p,q$)  such that   
\begin{align} 
d_{\partial X}(\fhat(p) , \fhat(q)) &  \ \leq \ k r^{- ( \fhat(p) \cdot \fhat(q))_{f(e)}^X}      \label{boundX} 
 \end{align}
 and
\begin{align} 
d_{\partial Y}(p,q) & \ \geq \ l s^{- ( p \cdot q)_{e}^Y}.      \label{boundY} 
 \end{align}
 
Since $(p\cdot q)_e^Y=\sup \liminf_{ m,n \to \infty}{(p_m \cdot q_n)^Y_e}$, there exist sequences $(p_m)$ and $(q_n)$ in $Y$ representing $p$ and $q$, respectively, with $\liminf_{m,n\to\infty}{(p_m\cdot q_n)_e^Y}\geq (p\cdot q)_e^Y-1$.
So  $(p_m\cdot q_n)_e^Y\geq (p,q)_e^Y-2$ for all sufficiently large $m,n$.
By definition of $M$ we have $(f(p_m)\cdot f(q_n))_{f(e)}^X\geq M((p \cdot q)_e^Y-2)$  for such $m,n$, and hence 
\begin{align} 
(\fhat(p)\cdot\fhat(q))_{f(e)}^X & \ \geq \  M((p \cdot q)_e^Y-2). \label{X by Y}
\end{align}

Combining \eqref{boundX} and \eqref{X by Y}, we have  
\begin{align}
d_{\partial X}(\fhat(p) , \fhat(q)) & \ \le  \  k r^{- M \left( ( p \cdot q)_e^{Y} -2 \right)}.  \label{boundX2} 
\end{align}  
So, by \eqref{boundY}, if we make $d_{\partial Y}(p,q)$ sufficiently small, we can make $(p \cdot q)_e^Y$ arbitrarily large,   so by hypothesis make  $M \left( (p \cdot q)_e^Y -2\right)$ arbitrarily large,  and so by \eqref{boundX2} make 
$d_{\partial X}(\fhat(p) , \fhat(q))$ arbitrarily small.    Thus $\fhat$ is continuous. 

That \ref{extension}$\implies$\ref{exists and cts} is an immediate consequence of Lemma~\ref{compactification},  
for we just take $\fhat$ to be the restriction of $\overline{f}$.  Properness of $f$ guarantees that $\fbar(\partial Y)\subseteq\partial X$.

To see that \ref{exists and cts}$\implies$\ref{extension} we show that the function $\fbar:=f\cup\fhat$ is continuous.
Since $\overline{X}$ and $\overline{Y}$ are metric spaces, it suffices to show that $p_n\to p$ implies $\fbar(p_n)\to\fbar(p)$ whenever $p_n,p\in\overline{Y}$.  Since $Y$ is an open subset of $\overline{Y}$ on which $\fbar$ restricts to the continuous function $f$, we may assume $p\in\partial Y$.  Since $\fhat$ is continuous, we may assume each $p_n\in Y$.  But then Lemma~\ref{compactification} 
says $(p_n)$ \emph{represents} $p$, so by \ref{exists and cts}, $(f(p_n))$ represents $\fhat(p)$.  Using Lemma~\ref{compactification} 
again, we see $f(p_n)\to\fhat(p)$.  That is, $\fbar(p_n)\to\fbar(p)$.

The equivalence of \ref{product M} and \ref{Mitra M} comes from Lemma~\ref{6 delta}, which implies that  
there exists $C>0$ such that $M'(N) \leq M(N + C) + C$ and $M(N) \leq M'(N + C) + C$ for all $N$.   
 
That \ref{Mitra M}$\implies$\ref{Baker M} is immediate as  $[z,y]_Y$ is a  geodesic segment in $Y$ lying outside $B_Y(e, N)$.\

Here is a proof that \ref{Baker M}$\implies$\ref{Mitra M} under the assumption that  $\sup\{ d_X(f(x), f(y)) \mid d_Y(x,y) \leq r \} < \infty$ for all $r \geq 0$.  
  Suppose $\lambda=[h_1,h_2]_Y$.  As $t:=(h_1 \cdot h_2)^Y_e$   approximates $d_Y( \lambda ,e)$ with error at most
  a constant (Lemma~\ref{6 delta}), it is enough to show $d_X([ f(h_1),f(h_2)]_X,e)\to\infty$ as $t\to\infty$.  Let $\alpha_i = [e,h_i]_Y$  for $i=1,2$.      By the slim-triangles condition, $[f(h_1),f(h_2)]_X$ lies in a $C$-neighborhood of a piecewise-geodesic path $[f(h_1),f(\alpha_1(t))]_X\cup[f(\alpha_1(t)),f(\alpha_2(t))]_X\cup[f(\alpha_2(t)),f(h_2)]_X$ for some constant $C$.   So it is enough to show that the distance  of each of these three segments from $f(e)$  in $X$ tends to $\infty$ as $t \to \infty$.  
 This is so for  $[f(h_1),f(\alpha_1(t))]_X$ and $[f(\alpha_2(t)),f(h_2)]_X$ by \ref{Baker M}.  
By the thin-triangles condition  (see e.g.\   \cite[pages 408--409]{BH1}), $d_Y(\alpha_1(l), \alpha_2(l))$ is at most a constant for all $0 \leq l \leq t$, and so in particular   $d_Y(\alpha_1(t),\alpha_2(t))$ is at most a constant.  
The  assumption that  $\sup\{ d_X(f(x), f(y)) \mid d_Y(x,y) \leq r \} < \infty$ for all $r \geq 0$ gives an upper bound, independent of $t$, on the length of $[f(\alpha_1(t)),f(\alpha_2(t))]_X$.
Since the distances of the endpoints of this segment from $f(e)$ in $X$ tend to $\infty$ as $t \to \infty$, so does the distance of  the whole segment.
\end{proof}

The first part of the following lemma shows that Theorem~\ref{existence thm} is not a quirk of the choice of generating sets.  The second establishes the sense in which the function $\varepsilon(\delta)$ of Section~\ref{intro} is an invariant for Cannon--Thurston maps.  The third will allow us to reinterpret  Lemma~\ref{lemma:Mitra}  (as Corollary~\ref{cor:2point3}) in a manner well suited to analyzing  hyperbolic hydra.  

\begin{lemma} \label{equiv existence}
Suppose $\Lambda$ is a hyperbolic subgroup of a hyperbolic group $\Gamma$.  
\vspace*{-6pt}
\begin{enumerate}
\renewcommand{\theenumi}{\textup{(\roman{enumi})}}
\item \label{gen set} Whether the  Cannon--Thurston map $\partial \Lambda \to \partial \Gamma$ exists does not depend on the choice of finite generating sets giving the word metrics. 

\item \label{delta independence} 
If the Cannon--Thurston map $\imath : \partial \Lambda \to \partial \Gamma$ exists  for a hyperbolic subgroup $\Lambda$ of a hyperbolic group $\Gamma$,   the modulus of continuity  for $\imath$  does not depend on the finite generating sets and the choices of visual metrics  up to the following H\"older-type  equivalence.  If $\varepsilon(\delta)$  and  $\varepsilon'(\delta)$ are the moduli of continuity of  $\imath$ defined with respect to different such choices, then there are functions $f_1, f_2 : (0,\infty) \to (0,\infty)$, each of the form $x \mapsto C_ix^{\alpha_i}$ for some $C_i,\alpha_i>0$, such that $\varepsilon'(\delta) \leq (f_1 \circ \varepsilon \circ f_2) (\delta)$ for all $\delta >0$.

\item \label{Cayley graphs} Suppose $A$ and $B$ are  finite generating sets  for $\Gamma$ and  $\Lambda$, respectively.   Suppose   $f :  C_B(\Lambda)  \to   C_A(\Gamma)$ is any map between the respective Cayley graphs which restricts to the inclusion $\Lambda \hookrightarrow \Gamma$ on the vertices of $C_B(\Lambda)$ and sends  edges to geodesics in $C_A(\Gamma)$.   The  Cannon--Thurston map $\partial \Lambda \to \partial \Gamma$, defined in terms of finite generating sets $A$ for $\Gamma$ and $B$ for $\Lambda$, exists  if and only if   the Cannon--Thurston map  $\partial  C_B(\Lambda)  \to \partial  C_A(\Gamma)$ does. 
\end{enumerate}  
 \end{lemma}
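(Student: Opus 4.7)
My plan is to prove (iii) first, use it to deduce (i), and then handle (ii) by a Hölder chase using Lemma~\ref{visual}.

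For (iii), the vertex inclusions $(\Lambda, d_B) \hookrightarrow C_B(\Lambda)$ and $(\Gamma, d_A) \hookrightarrow C_A(\Gamma)$ are isometries onto $\frac{1}{2}$--dense subspaces, so they induce canonical bijections $\partial \Lambda \leftrightarrow \partial C_B(\Lambda)$ and $\partial \Gamma \leftrightarrow \partial C_A(\Gamma)$: any sequence in the Cayley graph is within $\frac{1}{2}$ of a sequence of vertices with the same Gromov--product behaviour up to additive error. Since $f$ restricts to the inclusion $\Lambda \hookrightarrow \Gamma$ on vertices and sends edges to geodesics, any geodesic between $f$--images of vertices in $C_A(\Gamma)$ lies within bounded Hausdorff distance of the $f$--image of the corresponding vertex--geodesic in $C_B(\Lambda)$. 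The functions $M$, $M'$, $M''$ of Lemma~\ref{lemma:Mitra} for $f$ therefore differ from those for the group inclusion by bounded additive error, so Mitra's criterion \ref{product M} makes the two well--definedness statements equivalent.

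For (i), let $A, A'$ be generating sets of $\Gamma$ and $B, B'$ of $\Lambda$. Build maps $C_{A'}(\Gamma) \to C_A(\Gamma)$ and $C_{B'}(\Lambda) \to C_B(\Lambda)$ by fixing group elements on vertices and sending edges to geodesics (possible since each generator of $A'$ has bounded $d_A$--length as $A$ generates $\Gamma$, and similarly for $B$ vs.\ $B'$). These fit the hypotheses of (iii) applied to the identity maps of $\Gamma$ and $\Lambda$, yielding canonical identifications of $\partial C_A(\Gamma)$ with $\partial C_{A'}(\Gamma)$ and $\partial C_B(\Lambda)$ with $\partial C_{B'}(\Lambda)$. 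Applying (iii) to the two Cayley--graph incarnations of the inclusion $\Lambda \hookrightarrow \Gamma$, well--definedness for the Cannon--Thurston map with $A, B$ is equivalent to well--definedness with $A', B'$, because the bi--Lipschitz changes in metric preserve Mitra's condition $M'(N)\to\infty$ via hyperbolic stability of quasi--geodesics (a geodesic in one metric is a quasi--geodesic in the other and thus lies in a bounded neighbourhood of some geodesic in the second metric).

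For (ii), suppose $\imath$ is well--defined, and let $\varepsilon, \varepsilon'$ be the moduli for two choices of generating sets and visual metrics. Changing the generating set for $\Lambda$ induces a quasi--isometry of Cayley graphs whose extension to $\partial \Lambda$ is bi--Hölder in any fixed visual metric (Lemma~\ref{visual}; see also \cite[Theorem~2.18]{KB}); similarly for $\Gamma$. Changing visual parameter or basepoint with a fixed generating set likewise yields a bi--Hölder self--homeomorphism by Lemma~\ref{visual}. Pre-- and post--composing $\imath$ with these Hölder--equivalences of the form $x\mapsto Cx^{\alpha}$ and taking a supremum gives
\[ \varepsilon'(\delta) \ \leq \ C_1\bigl(\varepsilon(C_2\delta^{\alpha_2})\bigr)^{\alpha_1}, \]
which has the form $(f_1\circ\varepsilon\circ f_2)(\delta)$ with $f_i(x) = C_i x^{\alpha_i}$.

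The main obstacle is the bookkeeping needed to track how Gromov products and distances--to--geodesics transform under bi--Lipschitz and quasi--isometric changes of metric. Lemma~\ref{6 delta} together with stability of quasi--geodesics keeps this manageable, and makes Mitra's criterion $M'(N)\to\infty$ (formulation \ref{Mitra M} of Lemma~\ref{lemma:Mitra}) the most natural one for comparing different choices of generating sets.
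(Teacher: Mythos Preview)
Your proposal is correct. For (ii) and (iii) it is essentially the paper's argument: the paper proves (iii) by noting that the vertex inclusion into the Cayley graph induces an isometry on boundaries, and proves (ii) by the composition inequality $\varepsilon_{g\circ h}\leq\varepsilon_g\circ\varepsilon_h$ together with the H\"older equivalence of visual metrics from Lemma~\ref{visual}.

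For (i), however, you take a genuinely different route. You prove (iii) first, pass to Cayley graphs, and then invoke quasi--geodesic stability to show that Mitra's criterion $M'(N)\to\infty$ is preserved under the bi--Lipschitz change of word metric. The paper instead argues directly by composition: the map $\partial_1\Lambda\to\partial_1\Gamma$ factors as $\partial_1\Lambda\to\partial_2\Lambda\to\partial_2\Gamma\to\partial_1\Gamma$, and the outer maps, being induced by Lipschitz identity maps between word metrics, are automatically well--defined; so well--definedness of the middle map suffices. The paper's route is shorter and never touches Mitra's criterion or quasi--geodesic stability, while yours makes the hyperbolic geometry doing the work more explicit and dovetails naturally with the Cayley--graph formulation you need anyway for (iii). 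Both are fine; note in particular that the paper's ordering (prove (i) first, then use it in (ii), then do (iii) independently) lets the composition argument for (ii) come essentially for free from (i).
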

 
\begin{proof}
Suppose $\partial_1 \Gamma$, $\partial_2 \Gamma$, $\partial_1 \Lambda$, and $\partial_2 \Lambda$ are boundaries of $\Gamma$ and $\Lambda$ defined with respect to different finite generating sets  and the Cannon--Thurston map $\partial_1\Lambda\to\partial_1\Gamma$ exists.
 The identity maps on $\Lambda$ and $\Gamma$ changing the word metrics are Lipschitz, so induce maps $\partial_2\Lambda\to\partial_1\Lambda$ and $\partial_1\Gamma\to\partial_2\Gamma$ (Theorem~3.9 of  \cite[page~430,  III.H.3.7]{BH1}).  The composite map $\partial_2\Lambda\to\partial_1\Lambda\to\partial_1\Gamma\to\partial_2\Gamma$ satisfies Lemma~\ref{lemma:Mitra}\ref{exists}, so the Cannon--Thurston map $\partial_2\Lambda\to\partial_2\Gamma$ exists.

If $X,Y,Z$ are metric spaces and $g:Y\to Z$ and $h:X\to Y$ are maps with moduli of continuity $\varepsilon_g$ and $\varepsilon_h$, respectively, then it follows  from the definition that $\varepsilon_{g\circ h}(\delta)\leq (\varepsilon_{g}\circ\varepsilon_{h})(\delta)$ for all $\delta >0$.  Specializing to the case of $\partial_2 \Lambda \to \partial_1 \Lambda \to \partial_1 \Gamma \to \partial_2 \Gamma$ in the previous paragraph, \ref{delta independence} follows from the fact (Proposition~5.5 and Theorem~6.5 of \cite{BS1}) that $\partial_2 \Lambda \to \partial_1 \Lambda$ and $\partial_1 \Gamma\to\partial_2 \Gamma$ are H\"older.   

For \ref{Cayley graphs},  a Cannon--Thurston map $\overline{C_B(\Lambda)}\to\overline{C_A(\Gamma)}$ restricts to a Cannon--Thurston map $\overline{\Lambda}\to\overline{\Gamma}$.  Conversely, the quasi-isometries $\Gamma\hookrightarrow C_A(\Gamma)$ and $\Lambda\hookrightarrow C_B(\Lambda)$ induce isometries $\partial\Gamma\to\partial C_A(\Gamma)$ and $\partial C_B(\Lambda)\to\partial\Lambda$.  So a Cannon--Thurston map $\partial\Lambda\to\partial\Gamma$ induces a composite map $\partial C_B(\Lambda)\to\partial\Lambda\to\partial\Gamma\to\partial C_A(\Gamma)$ satisfying Lemma~\ref{lemma:Mitra}\ref{exists}.
\end{proof}

\begin{cor}
\label{cor:2point3}
Suppose $\Lambda$ is a  finite-rank  free subgroup of a hyperbolic group $\Gamma$.  Suppose $A$ is a finite generating set  for $\Gamma$ and $B$ is a free basis for  $\Lambda$.   The Cannon--Thurston map $\partial \Lambda \to \partial \Gamma$ exists if and only if   for all $M''>0$, there exists $N$ such that  whenever  $\alpha \beta$ is a reduced word on $B$ with $|\alpha| \geq N$,  every geodesic in the Cayley graph $C_A(\Gamma)$ joining $\alpha$ to $\alpha \beta$ lies outside the ball of radius $M''$ about $e$.
 \end{cor}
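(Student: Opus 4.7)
The plan is to apply the equivalence \ref{exists}$\Leftrightarrow$\ref{Baker M} of Lemma~\ref{lemma:Mitra} to $Y := C_B(\Lambda)$, $X := C_A(\Gamma)$, and the map $f : Y \to X$ from Lemma~\ref{equiv existence}\ref{Cayley graphs}.  By that last item, the Cannon--Thurston map $\partial \Lambda \to \partial \Gamma$ is well--defined if and only if the induced map $\partial Y \to \partial X$ is, so it suffices to characterize the latter.

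I first verify the hypotheses of Lemma~\ref{lemma:Mitra}.  Both $X$ and $Y$ are proper geodesic hyperbolic spaces; indeed $Y$ is a tree (since $B$ freely generates $\Lambda$), hence $(0)$--hyperbolic.  Setting $L := \max_{b \in B} |b|_A$, each edge of $Y$ is sent by $f$ to a geodesic in $X$ of length at most $L$, so $f$ is $L$--Lipschitz.  From this, $f$ is proper (word--balls in $\Gamma$ are finite, and interiors of edges map into bounded neighborhoods of their endpoints) and satisfies $\sup\{d_X(f(x),f(y)) \mid d_Y(x,y) \leq r\} < \infty$ for all $r \geq 0$, which is the extra hypothesis required to access \ref{Baker M}.

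The key observation is that since $Y$ is a tree, for any vertex $y \in \Lambda$ the geodesic $[e,y]_Y$ is unique and its successive vertices are exactly the prefixes of the reduced $B$--word spelling $y$.  So a vertex $z$ on $[e,y]_Y$ with $d_Y(e,z) \geq N$ corresponds to writing $y = \alpha\beta$ as a reduced $B$--word with $z = \alpha$ and $|\alpha| \geq N$; in this case $[f(z),f(y)]_X$ is an $A$--geodesic in $\Gamma$ from $\alpha$ to $\alpha\beta$.  Thus the corollary's condition is exactly that the ``vertex version'' of $M''(N)$ (restricting $z$ to be a vertex) tends to $\infty$.

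It remains to bridge this vertex version with the full $M''(N)$, which also allows $z$ to lie in the interior of an edge.  For such a $z$, the nearer vertex $z'$ on the containing edge satisfies $d_Y(e,z') \geq d_Y(e,z) - 1$ and $d_X(f(z),f(z')) \leq L$.  By the slim--triangles condition in $X$, the Hausdorff distance between $[f(z),f(y)]_X$ and $[f(z'),f(y)]_X$ is bounded in terms of $L$ and $\delta_X$, so the distances from $f(e)$ to these two segments differ by at most an additive constant.  Consequently $M''(N) \to \infty$ if and only if its vertex restriction does, completing the equivalence.  No step is truly difficult: the only (mild) nuisance is the last bookkeeping step of passing between interior edge--points and vertices; the rest is a direct translation of \ref{Baker M} through the tree structure of $C_B(\Lambda)$.
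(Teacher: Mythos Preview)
Your proof is correct and follows essentially the same route as the paper: pass to Cayley graphs via Lemma~\ref{equiv existence}\ref{Cayley graphs}, then invoke condition~\ref{Baker M} of Lemma~\ref{lemma:Mitra} using that $C_B(\Lambda)$ is a tree so geodesics from $e$ correspond to reduced $B$--words. You are in fact more careful than the paper in two places---explicitly verifying the Lipschitz/properness hypotheses, and handling the bookkeeping between arbitrary points $z$ on $[e,y]_Y$ and vertices---both of which the paper leaves implicit.
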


\begin{proof}
By Lemma~\ref{equiv existence}\ref{Cayley graphs}, the Cannon--Thurston map $\partial \Lambda \to \partial \Gamma$ exists if and only if  
  the Cannon--Thurston map  $\partial  C_B(\Lambda)  \to \partial  C_A(\Gamma)$ for $f$ (as defined in that lemma) does.  Now 
 applying condition \ref{Baker M} of  Lemma~\ref{lemma:Mitra} to $f$ gives the result, since reduced words correspond to geodesics in $C_B(\Lambda)  $.
\end{proof}

\section{Cannon--Thurston maps for hyperbolic hydra groups} \label{CT for hydra}

The \emph{hyperbolic hydra} $\Gamma_k$  are a family of hyperbolic groups with  distorted (when $k>1$) free subgroups $\Lambda_k$.  In this section we will review some pertinent details from \cite{HypHydra} and  \cite{Hydra}   of the construction and properties of the hyperbolic hydra groups $\Gamma_k$.  We then show the existence of Cannon--Thurston maps $\partial\Lambda_k\to\partial\Gamma_k$ for hyperbolic hydra.  Throughout, we fix an integer $k \geq 1$.

The hyperbolic hydra $\Gamma_k$ of \cite{HypHydra} is an elaboration of the \emph{hydra group} $G_k$ of \cite{Hydra}: 
\begin{align*} 
G_k & \  = \  F(a_1, \ldots, a_k) \rtimes_{\phi}  \Z   
\end{align*}
where  $\phi$ is the automorphism     
$$\phi(a_i) \ = \    \begin{cases} 
         \ a_1    &     i=1, \\
         \ a_i a_{i-1}  &   1 < i \leq  k
         \end{cases}   $$
 of the free group $F_k = F(a_1, \ldots, a_k)$.
     Let    $t$ denote a generator of the $\Z$-factor, so $\phi(a_i) = t^{-1} a_i t$ in $G_k$ for all $i$.   
In \cite{Hydra} it is proved that $G_k$ is CAT(0) and has a  rank-$k$ free subgroup $H_k  =  \langle a_1t, \ldots, a_kt \rangle$, distorted  so that $\Dist^{G_k}_{H_k} \simeq A_k$.

Since the restriction of $\phi$ to $\langle a_1,\ldots,a_{i-1}\rangle = F_{i-1}$ is an automorphism for each $i$ and $\phi(a_i)\in a_i\langle a_1,\ldots,a_{i-1}\rangle$, we have
\begin{lemma} \label{what phi does}
For any integer $j$ (positive or negative), 
$$\phi^{j}(a_i)\in{a_i}\langle a_1,\ldots,a_{i-1}\rangle\textrm{\qquad and \qquad}\phi^{j}(a_i^{-1})\in\langle a_1,\ldots,a_{i-1}\rangle a_i^{-1}.$$
\end{lemma}
For example, $\phi^{-1}(a_7)=a_7a_5a_3a_1a_2^{-1}a_4^{-1}a_6^{-1}$.

The \emph{normal form} of $g$ in $G_k$ is the unique $\wh{w} t^m$ 
such that $\wh{w}$ is a reduced word on $a_1, \ldots, a_k$ and $g = \wh{w} t^m$ in $G_k$.
For any $r\leq k$, an \emph{$H_r$-word} is a reduced word on $ a_1t, \ldots, a_rt$.  For example,    $(a_3t)(a_3t)(a_2t)(a_3t)^{-1}$ is an $H_3$-word and its normal form is  $a_3a_3(a_1^{-1}a_2a_1^2a_2a_1^{-1})a_3^{-1}t^2$, since in $G_3$ 
\begin{eqnarray*}
(a_3t)(a_3t)(a_2t)(a_3t)^{-1}&=&a_3(ta_3t^{-1})(t^2a_2t^{-2})(t^2{a_3^{-1}}t^{-2})t^2 \\
 &=&a_3\phi^{-1}(a_3)\phi^{-2}(a_2)\phi^{-2}(a_3^{-1})t^2\\
 &=&a_3(a_3a_1a_2^{-1})(a_2a_1^{-2})(a_2a_1^2a_2a_1^{-1}a_3^{-1})t^2\\
 &=&a_3a_3(a_1^{-1}a_2a_1^2a_2a_1^{-1})a_3^{-1}t^2.
\end{eqnarray*}

Lemma 6.1 of \cite{Hydra}, which says that $H_k\cap\langle t\rangle=\set{1}$ in $G_k$, implies the following two lemmas:

\begin{lemma} \label{uniqueness of exponent}
Given $g\in G_k$,  if there exists $j$ such that   $g  t^{j} \in H_k$, then that $j$ is unique. 
\end{lemma}

\begin{lemma} \label{uniqueness from normal form}  
 If $g_1, g_2 \in H_k$   have normal forms $\wh{w} t^{n_1}$ and $\wh{w} t^{n_2}$, respectively, then $g_1 =g_2$.  
\end{lemma}

Observe that the relative locations of the   $(a_3t)^{\pm 1}$ in $(a_3t)(a_3t)(a_2t)(a_3t)^{-1}$ are the same as  the relative locations of the $a_3^{\pm 1}$  in $a_3a_3(a_1^{-1}a_2a_1^2a_2a_1^{-1})a_3^{-1}$.  
That is, ignoring all other symbols, the former word has two $(a_3t)$ symbols followed by $(a_3t)^{-1}$, while the latter has two $a_3$ symbols followed by $a_3^{-1}$.  This is an instance of the following lemma.

\begin{lemma}\label{normal form of h word}
Consider an  $H_r$-word  $w$ of the form  $$u_0(a_rt)^{\epsilon_1}u_1(a_rt)^{\epsilon_2}u_2\cdots(a_rt)^{\epsilon_n}u_n$$
where $u_0,\ldots,u_n\in H_{r-1}$ and $\epsilon_1,\ldots,\epsilon_n\in\{\pm1\}$, and  $u_i\neq1$ whenever $\epsilon_i=-\epsilon_{i+1}$.  Then, for all $s \in \Z$, the normal form $\wh{w} t^m$ of $w$ satisfies
$$\phi^s(\wh{w})=v_0a_r^{\epsilon_1}v_1a_r^{\epsilon_2}v_2\cdots a_r^{\epsilon_n}v_n$$
for some $v_0,\ldots,v_n\in\langle a_1,\ldots,a_{r-1}\rangle$ where $v_i\neq1$ whenever $\epsilon_i=-\epsilon_{i+1}$.
\end{lemma}

The case $s=0$ is Lemma 6.2 of \cite{Hydra}.  We will not need to use Lemma \ref{normal form of h word}; we state it   because it sets the scene for the  analogous Lemma~\ref{normal form of lambda word} below.  A proof of \ref{normal form of h word} can be extracted from the proof  we will give for \ref{normal form of lambda word} by replacing $\Gamma_k,\Lambda_r,\Lambda_{r-1},\phi$ with $G_k,H_r,H_{r-1},\theta$, respectively, and invoking Lemmas \ref{what phi does} and \ref{uniqueness from normal form} instead of   \ref{what theta does} and \ref{uniqueness from normal form hyperbolic}, respectively. 

The construction  of $G_k$  above is elaborated in \cite{HypHydra} to give the hyperbolic hydra $\Gamma_k$.     
It involves additional variables $a_0,b_1,\ldots,b_l$ and has the   form
$$\Gamma_k  \ = \ F \rtimes_{\theta} \Z$$ where  $F$ is the free group $F(a_0, \ldots, a_k, b_1, \ldots, b_l)$,   and 
  $\theta$ is an  automorphism of $F$ whose restriction  to $F(b_1, \ldots, b_l)$ is an automorphism and
   $$\theta(a_i)  \ = \  \begin{cases} 
         \ U a_1 V   &  i=0, \\  
         \ a_0    &     i=1, \\
         \ a_i a_{i-1}  &   1 < i \leq  k, 
         \end{cases}   $$
where  $U$ and $V$ are words  on $b_1, \ldots, b_l$.  We will prove here that  Cannon--Thurston maps exist for all hyperbolic  $\Gamma_k$ of this form.  In  \cite{HypHydra}, $U$, $V$, $l$ and   $\theta |_{F(b_1, \ldots, b_l)}$ are chosen carefully to ensure $\Gamma_k$ is hyperbolic.  (In fact, in \cite{HypHydra},  $l=17$, and  $U$ and $V$ depend on $k$, but  $\theta |_{F(b_1, \ldots, b_l)}$ does not.) 
         
Let $t$ denote a generator of the $\Z$-factor in $\Gamma_k  \ = \ F \rtimes_{\theta} \Z$, so $t^{-1} a_i t = \theta(a_i)$ and $t^{-1} b_j t = \theta(b_j)$ for all $i$ and $j$.
For $1 \leq r \leq k$, let $\Lambda_r $ be the subgroup $ \langle a_0t,   \ldots, a_r t, b_1, \ldots, b_l \rangle$ of $\Gamma_k$.  It is proved in \cite{HypHydra} that $\Lambda_k$ is free of rank $k+l+1$ and is distorted so that $\Dist^{\Gamma_k}_{\Lambda_k} \succeq A_k$.    Let $\Lambda_0=\langle b_1,\ldots, b_l \rangle$.  (Note that $\Lambda_r$ actually depends on both $k$ and $r$, but $k$ is fixed throughout this section.)   

Understand  $\theta^n(\wh{w})$ to mean the reduced  word  on $a_0, \ldots, a_k, b_1, \ldots, b_l$ that represents $\theta^n(\wh{w})$ in $F$. 

Mapping   $a_i \mapsto a_{\max \{1,i\}}$, $b_j \mapsto 1$ and $t \mapsto t$ for all $i,j$ defines a surjection $\Gamma_k \onto G_k$ such that $\Phi(\Lambda_r)=H_r$.  

Corresponding to Lemmas~\ref{what phi does}--\ref{normal form of h word} for $G_k$, we have the following Lemmas~\ref{what theta does}--\ref{normal form of lambda word} for $\Gamma_k$.
 
The definition of $\theta$ immediately gives:
\begin{lemma} \label{what theta does}
For any integer $j\in\mathbb{Z}$ (positive or negative), and any $i>1$
$$\theta^{j}(a_i)\in{a_i}\langle a_0,a_1,\ldots,a_{i-1},b_1,\ldots,b_l\rangle\textrm{\qquad and \qquad}\theta^{j}(a_i^{-1})\in\langle a_0,a_1,\ldots,a_{i-1},b_1,\ldots,b_l\rangle a_i^{-1}.$$
\end{lemma}

The \emph{normal form} of $g \in \Gamma_k$ is the unique $\wh{g} t^m$ 
such that $\wh{g}$ is a reduced word on $a_0, a_1, \ldots, a_k,b_1,\ldots,b_l$ and $g = \wh{g} t^m$ in $\Gamma_k$.
For any $1\leq r\leq k$, a \emph{$\Lambda_r$-word} is a reduced word on $a_0t, a_1t, \ldots, a_rt,b_1,\ldots,b_l$.  Likewise, a $\Lambda_0$-word is a reduced word on $b_1,\ldots,b_l$.

Proposition 4.8 of \cite{HypHydra} says that $\Lambda_k\cap\langle t\rangle=\set{1}$. So we immediately have the following analogues of Lemmas~\ref{uniqueness of exponent} and \ref{uniqueness from normal form}: 

 \begin{lemma} \label{uniqueness of exponent hyperbolic}
Given $g\in \Gamma_k$,  if there exists $j$ such that   $g  t^{j} \in \Lambda_k$, then that $j$ is unique. 
\end{lemma}
 
 \begin{lemma} \label{uniqueness from normal form hyperbolic}
 If $g_1, g_2 \in \Lambda_k$   have normal forms $\wh{w} t^{n_1}$ and $\wh{w} t^{n_2}$, respectively, then $g_1 =g_2$.  
 \end{lemma}

Finally, we have analogues of Lemma~\ref{normal form of h word}.  We treat the $r>1$ and $r=1$ cases separately.   First--

\begin{lemma}\label{normal form of lambda word} 
Let $r>1$.  Consider a $\Lambda_r$--word $w$ of the form $$u_0(a_rt)^{\epsilon_1}u_1(a_rt)^{\epsilon_2}u_2\cdots(a_rt)^{\epsilon_n}u_n$$
where $u_0,\ldots,u_n\in \Lambda_{r-1}$ and $\epsilon_1,\ldots,\epsilon_n\in\{\pm1\}$, and  $u_i\neq1$ whenever $\epsilon_i=-\epsilon_{i+1}$.
Then for any $s\in\mathbb{Z}$,
 $$\theta^s(\wh{w})=v_0a_r^{\epsilon_1}v_1a_r^{\epsilon_2}v_2\cdots a_r^{\epsilon_n}v_n$$
for some $v_0,\ldots,v_n\in\langle a_0,a_1,\ldots,a_{r-1},b_1,\ldots,b_l\rangle$  where $v_i\neq1$ whenever $\epsilon_i=-\epsilon_{i+1}$.
\end{lemma}

In short, Lemma~\ref{normal form of lambda word} says that for a reduced $\Lambda_r$ word $w$ and its normal form $\wh{w}t^n$,
the occurrences of $a_rt$ in $w$ correspond to occurrences of $a_r$ in $\wh{w}$ (and indeed in   $\theta^s(\wh{w})$ for any $s$) and the occurrences of $(a_rt)^{-1}$ in $w$ correspond to occurrences of $a_r^{-1}$ in $\wh{w}$ (and indeed in   $\theta^s(\wh{w})$ for any $s$): the count of each and the order in which they occur in their respective words is preserved.

\begin{proof}[Proof of Lemma~\ref{normal form of lambda word}]

Write each $u_j$ in normal form: $u_j=\wh{u_j}t^{m_j}$ with $\wh{u_j}\in\langle a_0,a_1,\ldots,a_{r-1},b_1,\ldots,b_l\rangle$. Then
\begin{equation}\label{whg hyp}
\theta^s(\wh{w})=\theta^s(\wh{u_0})\theta^{p_1}(a_r^{\epsilon_1})\theta^{q_1}(\wh{u_1})\theta^{p_2}(a_r^{\epsilon_2})\theta^{q_2}(\wh{u_2})\cdots\theta^{p_{n}}(a_r^{\epsilon_n})\theta^{q_n}(\wh{u_n})
\end{equation}
where $q_i=s-(m_0+m_1+\cdots+ m_{i-1})-(\epsilon_1+\cdots+\epsilon_{i})$ and where
$$p_i=\begin{cases}
s-(m_0+m_1+\cdots+ m_{i-1})-(\epsilon_1+\cdots+\epsilon_{i-1}) & \textrm{ if $\epsilon_{i}=1$}\\
s-(m_0+m_1+\cdots+ m_{i-1})-(\epsilon_1+\cdots+\epsilon_{i-1})+1 & \textrm{ if $\epsilon_{i}=-1$.}
\end{cases}$$

By Lemma \ref{what theta does}, the result follows unless $a_r^{\pm1}a_r^{\mp1}$ is cancelled during the reduction of the right side of equation (\ref{whg hyp}).
For an $a_r^{-1}a_r$ cancellation to occur in (\ref{whg hyp}), it would have to occur within $\theta^{p_i}(a_r^{-1})\theta^{q_i}(\wh{u_i})\theta^{p_{i+1}}(a_r)$ for some $i$.  But then Lemma~\ref{what theta does} would yield $\theta^{q_i}(\wh{u_i})=1$, so $\wh{u_i}=1$.  By Lemma~\ref{uniqueness from normal form hyperbolic}, we would have $u_i=1$, a contradiction. For an $a_ra_r^{-1}$ cancellation to occur in (\ref{whg hyp}), we would have $\theta^{p_i}(a_r)\theta^{q_i}(\wh{u_i})\theta^{p_{i+1}}(a_r^{-1})=1$ for some $i$ such that $\epsilon_i=1$ and $\epsilon_{i+1}=-1$.  But then $q_i-p_i=-1$ and $p_{i+1} -p_i=-m_i$.  So 
\begin{eqnarray*}
1 & = & \theta^{-p_i}(\theta^{p_i}(a_r)\theta^{q_i}(\wh{u_i})\theta^{p_{i+1}}(a_r^{-1})) \\
& = & a_r \theta^{q_i-p_i}( \wh{u_i}) \theta^{p_{i+1}-p_i}(a_{r}^{-1}) \\
& = & a_r \theta^{-1}( \wh{u_i}) \theta^{-m_i}(a_{r}^{-1}) \\
& = & a_r t  \wh{u_i} t^{-1} t^{m_i} a_r^{-1} t^{-m_i} \\
& = & a_r t u_i (a_r t)^{-1}  t^{-m_i}.
\end{eqnarray*}
So  $ t^{m_i} = (a_r t) u_i (a_r t)^{-1} \in \Lambda_k$.
But then $t^{m_i}=1$ by Lemma \ref{uniqueness from normal form hyperbolic}.
This would contradict the fact that $u_i\in \Lambda_{r-1}\smallsetminus\{1\}$.   
Thus no $a_r^{\pm1}a_r^{\mp1}$ cancellation occurs in the reduction of the right side of equation (\ref{whg hyp}) and the lemma is proved.
\end{proof}

Next we give the $r=1$ analogue to Lemma \ref{normal form of lambda word}.  Recall that $\theta(a_0)=Ua_1V$ and $\theta(a_1)=a_0$, so Lemma \ref{what theta does} does not apply in the $r=1$ case.  Roughly speaking, Lemma \ref{normal form of lambda word} could be expanded to accommodate the $r=1$ case by allowing occurrences of $a_0$ to sometimes swap with occurrences of $a_1$.  More precisely: 

\begin{lemma} \label{normal form of lambda word special case}
Consider a $\Lambda_1$--word $w$ of the form $$w=u_0(a_{\mu_1}t)^{\epsilon_1}u_1(a_{\mu_2}t)^{\epsilon_2}u_2\cdots(a_{\mu_n}t)^{\epsilon_n}u_n$$
where $u_0,\ldots,u_n\in \Lambda_{0}=\langle b_1,\ldots,b_l\rangle$ with $\mu_1,\ldots,\mu_n\in\{0,1\}$ and $\epsilon_1,\ldots,\epsilon_n\in\{\pm1\}$, and with $u_i\neq1$ whenever $\mu_i=\mu_{i+1}$ and $\epsilon_i=-\epsilon_{i+1}$.
Then for all $s\in\mathbb{Z}$,
 $$\theta^s(\wh{w})=v_0a_{\xi_1}^{\epsilon_1}v_1a_{\xi_2}^{\epsilon_2}v_2\cdots a_{\xi_n}^{\epsilon_n}v_n$$
for some $v_0,\ldots,v_n\in\langle b_1,\ldots,b_l\rangle$ and some $\xi_1,\ldots,\xi_n\in\{0,1\}$
with $v_i\neq1$ whenever $\xi_i=\xi_{i+1}$ and $\epsilon_i=-\epsilon_{i+1}$.
\end{lemma}

\begin{proof}
We have
\begin{equation}\label{whg hyp special case}
\theta^s(\wh{w})=\theta^s(u_0)\theta^{p_1}(a_{\mu_1}^{\epsilon_1})\theta^{q_1}(u_1)\theta^{p_2}(a_{\mu_2}^{\epsilon_2})\theta^{q_2}(u_2)\cdots\theta^{p_{n}}(a_{\mu_n}^{\epsilon_n})\theta^{q_n}(u_n)
\end{equation}
where $q_i=s-(\epsilon_1+\cdots+\epsilon_{i})$ and where
$$p_i=\begin{cases}
s-(\epsilon_1+\cdots+\epsilon_{i-1}) & \textrm{ if $\epsilon_{i}=+1$}\\
s-(\epsilon_1+\cdots+\epsilon_{i-1})+1 & \textrm{ if $\epsilon_{i}=-1$.}
\end{cases}$$
Each $\theta^{q_i}(u_i)\in\langle b_1,\ldots,b_l\rangle$ and $\theta^{p_i}(a_{\mu_i}^{\epsilon_i})\in\langle b_1,\ldots,b_l\rangle a_{\xi_i}^{\epsilon_i}\langle b_1,\ldots,b_l\rangle$ where $\xi_i=\mu_i+p_i$ (mod 2).  The result follows unless some $a_0^{\pm1}a_0^{\mp1}$ or $a_1^{\pm1}a_1^{\mp1}$ cancels in the reduction of the right side of equation (\ref{whg hyp special case}).
Such a cancellation would have to occur within $\theta^{p_i}(a_{\mu_i}^{\epsilon_i})\theta^{q_i}(u_i)\theta^{p_{i+1}}(a_{\mu_{i+1}}^{\epsilon_{i+1}})$ for some $i$.  But then
$$\theta^{p_i}(a_{\mu_i}^{\epsilon_i})\theta^{q_i}(u_i)\theta^{p_{i+1}}(a_{\mu_{i+1}}^{\epsilon_{i+1}})\in\langle b_1,\ldots,b_l\rangle.$$
Applying $\theta^{-p_i}$, we get
$$a_{\mu_i}^{\epsilon_i}\theta^{q_i-p_i}(u_i)\theta^{p_{i+1}-p_i}(a_{\mu_{i+1}}^{\epsilon_{i+1}})\in\langle b_1,\ldots,b_l\rangle.$$
For $a_0^{\pm1}a_0^{\mp1}$ or $a_1^{\pm1}a_1^{\mp1}$ to cancel, we would  have to have $\epsilon_{i+1}=-\epsilon_i$.  But then   $p_{i+1}=p_i$ (check the two cases $\epsilon_i=\pm1$), and so
$$a_{\mu_i}^{\epsilon_i} \theta^{q_i-p_i}(u_i) a_{\mu_{i+1}}^{-\epsilon_{i}}\in\langle b_1,\ldots,b_l\rangle.$$
But then $\theta^{q_i-p_i}(u_i)=1$   and $\mu_i=\mu_{i+1}$, because $\langle a_0, a_1, b_1, \ldots, b_l \rangle$ is free on the given generators. This would contradict the fact that $u_i\neq1$.
Thus no such cancellation occurs, and the lemma is proved.
\end{proof}

Next we give a technical lemma comparing the location of the final $(a_rt)^{\pm 1}$ in a $\Lambda_r$-word $w$ to the location of the corresponding $a_r^{\pm 1}$ in its normal form $\wh{w}t^m$, and moreover in  $\theta^n(\wh{w})$. A point of terminology: the `R' in the word `WORD' \emph{occurs 3 letters in}.

We use $\abs{u}_F$ to denote the length of a word $u$ on $a_0, \ldots, a_r, b_1, \ldots, b_l$.  And we use $\abs{u}_{\Lambda_r}$ to denote the length of a $\Lambda_r$-word---that is, length as a word on  $(a_0t), \ldots, (a_rt), b_1, \ldots, b_l$, not as a word on  $a_0, \ldots, a_r, b_1, \ldots, b_l$.

\begin{lemma} \label{lem: N exists} 
For all integers $ A, B \geq 0$ and $r$ with $k\geq r>1$, there exists $N$ such that if $|n|\leq B$ and $w = u (a_r t)^{\pm 1}$ is a reduced $\Lambda_r$-word  with $|u|_{\Lambda_r} \geq N$,   then the final
$a_r^{\pm 1}$ in $\theta^n(\wh{w})$  occurs at least $A$ symbols in.

Similarly, such an $N$ exists for the final $a_0^{\pm 1}$ or $a_1^{\pm 1}$ in  $\theta^n(\wh{w})$  when $r=1$ and $w$ is  $u(a_0t)^{\pm 1}$ or $u(a_1t)^{\pm 1}$.
\end{lemma}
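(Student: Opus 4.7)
The plan is to argue by contradiction. Suppose the conclusion fails: for some $A, B \geq 0$ and some $r$ there exist reduced $\Lambda_r$-words $w_j = u_j(a_r t)^{\epsilon_j}$ (or $u_j(a_0t)^{\epsilon_j}$ or $u_j(a_1t)^{\epsilon_j}$ in the $r=1$ setting) with $\ell_{\Lambda_k}(u_j)\to\infty$ and $|n_j|\le B$, such that the final $a_r^{\pm 1}$ (respectively the final $a_0^{\pm 1}$ or $a_1^{\pm 1}$) in $\theta^{n_j}(\wh{w_j})$ occurs within the first $A$ symbols. By pigeonhole on the finite sets of choices for $\epsilon_j$, $n_j$, and length-$A$ initial segments of $\theta^{n_j}(\wh{w_j})$, I would pass to a subsequence on which $\epsilon_j=\epsilon$, $n_j=n$ are constant and $\theta^n(\wh{w_j})$ begins with a fixed word $Y$.

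Next I would carry out the standard ``push the $t$'s to the right'' computation to express $\wh{w_j}$ as the free reduction in $F$ of $\wh{u_j}\cdot\theta^{T_j}(a_r^\epsilon)$, where $T_j\in\Z$ is determined by the $t$-balance of $u_j$; applying $\theta^n$ yields $\theta^n(\wh{w_j})$ as the free reduction of $\theta^n(\wh{u_j})\cdot\theta^{T_j+n}(a_r^\epsilon)$. An induction on $|s|$ from the defining relations $\theta(a_0)=ua_1v$, $\theta(a_1)=a_0$ and $\theta(a_i)=a_ia_{i-1}$ for $i\geq 2$ establishes the key structural fact: for $r\geq 2$ and $s\in\Z$, the reduced word $\theta^s(a_r)$ contains exactly one $a_r^{\pm 1}$-symbol and otherwise lies in the $\theta^{\pm 1}$-invariant subgroup $F_{r-1}:=\langle a_0,\ldots,a_{r-1},b_1,\ldots,b_l\rangle$; a parallel statement, with $\{a_0,a_1\}$ in place of $a_r$ and $F_0:=\langle b_1,\ldots,b_l\rangle$ in place of $F_{r-1}$, handles the $r=1$ case.

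The consequence is that the suffix of $\theta^n(\wh{w_j})$ beyond the fixed prefix $Y$ lies in $F_{r-1}$, so $\wh{w_j}\in\theta^{-n}(Y)\,F_{r-1}$ in $F$ and hence $w_j=\wh{w_j}t^{m_j}\in\theta^{-n}(Y)\,(F_{r-1}\rtimes\Z)$ in $\Gamma_k$, while $w_j\in\Lambda_r$. Because elements of $F_{r-1}$ contain no $a_r^{\pm 1}$-symbol and can cancel only with $F_{r-1}$-letters at the junction, the $a_r^{\pm 1}$ letters of $\wh{w_j}$ coincide with those of the fixed word $\theta^{-n}(Y)$; so their number $K$ and their sign pattern are independent of $j$, and $\wh{w_j}$ takes the form of a fixed initial segment (terminating in the final $a_r^{\pm 1}$ of $\theta^{-n}(Y)$) concatenated with a variable $F_{r-1}$-tail.

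The contradiction is then extracted from the tension between $w_j\in\Lambda_r$ being a reduced $\Lambda_r$-word ending in $(a_r t)^\epsilon$ of unbounded $\ell_{\Lambda_k}$-length and $\wh{w_j}$ being confined to the single coset $\theta^{-n}(Y)\,F_{r-1}$ of $F$. In the free-product decomposition $\Lambda_r=\Lambda_{r-1}*\langle a_rt\rangle$, this confinement forces the number of $(a_rt)^{\pm 1}$ letters in the reduced $\Lambda_r$-expression of $w_j$ and their exponents to stabilise along a further subsequence (via pigeonhole), leaving only the $\Lambda_{r-1}$-factors variable; Lemma~\ref{uniqueness of exponent} together with a Bass--Serre analysis of the HNN splittings of the intermediate subgroups $\Gamma_s=F_s\rtimes\Z$ recalled in the remark following Theorem~\ref{existence thm} then pins each such variable factor to a bounded set, contradicting $\ell_{\Lambda_k}(u_j)\to\infty$. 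The main obstacle I expect is this final step: making precise the ``pinning'' of the $\Lambda_{r-1}$-factors, which requires combining the free-product structure of $\Lambda_r$ with vertex- and edge-stabiliser information for the relevant HNN splittings, and simultaneously handling the case $\epsilon=1$, in which $\wh{w_j}$ ends in the variable-length $F_{r-1}$-tail of $\theta^{T_j}(a_r)$ rather than directly in an $a_r^{\pm 1}$-letter.
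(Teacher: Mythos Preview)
Your structural analysis is correct and closely parallels the paper's: the computation $\wh{w_j}=\wh{u_j}\,\theta^{-T_j}(a_r^{\epsilon})$ and the key fact that $\theta^s(a_r^{\pm 1})$ contains a unique $a_r^{\pm 1}$ with the rest lying in your $F_{r-1}$ are exactly what the paper uses (the paper checks the no-cancellation claims by pushing forward under $\Phi:\Gamma_k\to G_k$ to the simpler hydra setting). The gap is the final step, which you correctly flag as the obstacle but then propose to attack with far heavier machinery than is needed.

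The point you are missing is that your fixed prefix $P$ of $\wh{w_j}$ (up through its final $a_r^{\pm 1}$) already \emph{is} $\wh{w_j}$ or $\wh{u_j}a_r$, so there is nothing further to pin down. In the case $\epsilon=-1$, since $\theta^s(a_r^{-1})$ always \emph{ends} in $a_r^{-1}$, the final letter of $\wh{w_j}$ is that $a_r^{-1}$, whence $P=\wh{w_j}$ and $\wh{w_j}$ is constant along your subsequence. In the case $\epsilon=+1$, you yourself observed that $\wh{w_j}=\wh{u_j}\cdot a_r\cdot(\text{$F_{r-1}$-tail of }\theta^{-T_j}(a_r))$ with no cancellation at the $a_r$; so $P=\wh{u_j}a_r$ and $\wh{u_j}$ is constant. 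Now invoke Corollary~\ref{uniqueness from normal form} (the immediate consequence of the Lemma~\ref{uniqueness of exponent} you cite): an element of $\Lambda_k$ is uniquely determined by the $F$-part of its free-by-cyclic normal form. Hence $w_j$ (respectively $u_j$) is constant, contradicting $\ell_{\Lambda_k}(u_j)\to\infty$. The paper runs exactly this argument, phrased directly rather than by contradiction: the position of the final $a_r^{\pm 1}$ in $\theta^n(\wh{w})$ is either $|\theta^n(\wh{w})|_F$ or $|\theta^n(\wh{u})|_F+1$; only finitely many elements of $F$ have $\theta^n$-image shorter than $A$ for some $|n|\le B$ (as $\theta$ is an automorphism); and each such element is $\wh{w}$ (or $\wh{u}$) for at most one $\Lambda_k$-word by Corollary~\ref{uniqueness from normal form}. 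No free-product bookkeeping or Bass--Serre analysis is required.
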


\begin{proof} 
We may suppose $n$ is a fixed integer such that $\abs{n} \leq B$. 

Assume first that  $r>1$. In the manner explained in the comment following
Lemma~\ref{normal form of lambda word}, the   $(a_rt)^{\pm 1}$ in $w$ correspond to the  $a_r^{\pm 1}$ in $\theta^n(\wh{w})$.

First  we will address the case where $w = u (a_r t)^{-1}$.  Since the final letter of $\theta^{\pm 1}(a_r^{-1})$ is  $a_r^{-1}$, the final $a_r^{-1}$ in  $\theta^n(\wh{w})$ is in fact  the final letter of $\theta^n(\wh{w})$.  So we are seeking to prove  that  $| \theta^n(\wh{w}) |_{F} \geq A$ when $N$ is sufficiently large.   
 Since $\theta$ is an automorphism, only finitely many $v\in F$ satisfy $|\theta^n(v)|_F<A$.
By Lemma~\ref{uniqueness from normal form hyperbolic}, each such $v$ equals
$\wh{w}$ for at most one $\Lambda_r$-word $w$, so the lemma is proved by taking $N$
sufficiently large to avoid these finitely many $w$.

Next we address the case where $w = u (a_r t)$.  The normal forms of $u$ and $w$ are $\wh{w}t^{j+1}$  and $\wh{u}t^j$ for some $j$.  They  are related in that    $$\wh{w} t^{j+1} \ = \ w \ = \ u (a_rt) \ = \ \wh{u}t^j \, a_rt \ = \ \wh{u} \, \theta^{-j}(a_r) \, t^{j+1}.$$ 
Thus $\wh{w}=\wh{u}\theta^{-j}(a_r)$. 

Since  the first letter of $\theta^{n-j}(a_r)$ is $a_r$,  Lemma \ref{normal form of lambda word} says there is no cancellation between $\theta^n(\wh{u})$ and $\theta^{n-j}(a_r)$ in  $\theta^n(\wh{w}) = \theta^n(\wh{u})\cdot\theta^{n-j}(a_r)$.  (Otherwise there would be too few instances of $a_r$ in $\theta^n(\wh{w})$.) Thus the final $a_r$ in $\theta^n(\wh{w})$ occurs $|\theta^n(\wh{u})|_{F} +1$ letters in. So we are now seeking to prove that $|\theta^n(\wh{u})|_{F} +1 \geq A$  when $N$ is sufficiently large,  and this can be  handled as in the previous case.  
This completes the proof when $r > 1$.

Next, we do the case $r=1$.   Suppose $w=u(a_\mu t)^{\pm1}$ is reduced with $u=\wh{u}t^j$ and $\mu\in\{0,1\}$.  This time we have $\wh{w}=\wh{u}\theta^{-J}(a_\mu^{\pm1})$ where $|J-j|\leq1$.
This time there may be some cancellation between $\theta^n(\wh{u})$ and $\theta^{n-J}(a_\mu^{\pm1})$ in $\theta^n(\wh{w})=\theta^n(\wh{u})\cdot\theta^{n-J}(a_\mu^{\pm1})$, but the cancellation is restricted (by Lemma \ref{normal form of lambda word special case}) to only symbols from $\{b_1^{\pm 1},\ldots,b_l^{\pm 1}\}$.  We have two cases: $|J|\geq A$ or $|J|< A$. If $|J|\geq A$, then $|j|\geq A-1$, so Lemma \ref{normal form of lambda word special case} tells us that $\theta^n(\wh{u})$ contains at least $A-1$ symbols from among $\{a_0^{\pm1},a_1^{\pm1}\}$. Thus the final $a_0^{\pm1}$ or $a_1^{\pm1}$ of $\theta^n(\wh{w})$ occurs at least $A$ symbols in, as desired.  On the other hand, if $|J|<A$, then $|n-J|<n+A$ by the triangle inequality.  This gives a bound on $|\theta^{n-J}(a_\mu^{\pm1})|_F$ and hence on the amount of cancellation that can occur in $\theta^n(\wh{w})=\theta^n(\wh{u})\cdot\theta^{n-J}(a_\mu^{\pm1})$.
The lemma now follows by the same argument as in the $r>1$ case.
\end{proof}

Let $C(F)$ and $C(\Gamma_k)$ denote the Cayley graphs of $F$ and $\Gamma_k$ with respect to  $a_0$, $\ldots$, $a_k$, $b_1$, $\ldots$, $b_l$ and  $a_0$, $\ldots$, $a_k$, $b_1$, $\ldots$, $b_l$, $t$, respectively.  Let $B_{F}(e,R)$ denote the open ball  of radius $R$ about $e$  in $C(F)$.   Write $[x,y]_{F}$ or $[x,y]_{\Gamma_k}$ for a geodesic between $x$ and $y$ in $C(F)$ or $C(\Gamma_k)$, respectively.  In the case of $C(F)$, which is a tree, geodesics between any given pair of points are unique.  Let $d_F$ and $d_{\Gamma_k}$ be the associated metrics.

The \emph{shadow} of the suffix $\beta$ of a reduced $\Lambda_k$-word $\alpha\beta$ is the set of all geodesic segments $[\wh{\alpha\cdot\beta(i)},\wh{\alpha\cdot\beta(i+1)}]_F$ where $\beta(i)$ denotes the length-$i$ prefix of $\beta$ and $0\leq i<|\beta|_{\Lambda_k}$. 
 
\begin{lemma}
\label{lemma:farshadow}
For all $K>0$, there exist integers $C, R>0$ such that if   $\alpha\beta$ is a reduced $\Lambda_k$-word and $| \alpha |_{\Lambda_k} \geq C$ and the shadow of $\beta$ is outside    $B_F(e,R)$, then every $[\alpha, \alpha \beta]_{\Gamma_k}$ satisfies $d_{\Gamma_k}([\alpha,\alpha\beta]_{\Gamma_k},e) \geq K$.   
\end{lemma}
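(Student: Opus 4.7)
The plan is to argue by contrapositive: assume some vertex $v$ on the $\Gamma_k$--geodesic $[\alpha, \alpha\beta]_{\Gamma_k}$ satisfies $d_{\Gamma_k}(v, e) < K$, and show this forces either $|\alpha|_{\Lambda_k} < C$ or the shadow of $\beta$ to meet $B_F(e, R)$, provided $C$ and $R$ are chosen suitably large in terms of $K$ and the hyperbolicity constant $\delta$ of $\Gamma_k$.

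First, I would apply Lemma~\ref{6 delta} together with the identity $2(\alpha \cdot \alpha\beta)^{\Gamma_k}_e = |\alpha|_{\Gamma_k} + |\alpha\beta|_{\Gamma_k} - |\beta|_{\Gamma_k}$ to convert the distance hypothesis into the Gromov--product bound $(\alpha \cdot \alpha\beta)_e^{\Gamma_k} \leq K + 6\delta$. Since $B_{\Gamma_k}(e, K)$ is finite, the normal form $v = \hat{v}\, t^{m_v}$ satisfies $|m_v| \leq K$ and $\hat{v}$ lies in a finite set $S_K \subset F$; in particular $|\hat{v}|_F \leq R_0$ for some $R_0=R_0(K)$.

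Second, I would exploit the tree--structure of $C(F)$ together with the shadow hypothesis: the shadow of $\beta$ is a subtree of $C(F)$ containing $\hat{\alpha}$ and $\widehat{\alpha\beta}$, so it contains the unique $F$--tree--geodesic $[\hat{\alpha},\widehat{\alpha\beta}]_F$, which therefore lies outside $B_F(e, R)$. The closest point of this geodesic to $e$ is the $F$--meet $\hat{\alpha} \wedge \widehat{\alpha\beta}$, so $\hat{\alpha}$ and $\widehat{\alpha\beta}$ agree on a common $F$--prefix of length at least $R$; choosing $R > 2R_0$ ensures this prefix cannot be destroyed by left--multiplication by $\hat{v}^{-1}$.

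The main obstacle---and the heart of the proof---is to convert the preceding observations into a lower bound on $|v^{-1}\alpha|_{\Gamma_k} + |v^{-1}\alpha\beta|_{\Gamma_k}$ that strictly exceeds $|\beta|_{\Gamma_k}$, contradicting the assumption that $v$ lies on the $\Gamma_k$--geodesic. Applied with $B := K$ and $A$ chosen large in terms of $R_0$, Lemma~\ref{lem: N exists} provides a threshold $N$ such that for any $|\alpha|_{\Lambda_k} \geq N$ and $|n| \leq K$, the final $a_r^{\pm 1}$ in $\theta^n(\widehat{\alpha\beta_i})$ sits deep enough in the $F$--word that neither it nor the common prefix identified above can be cancelled by the short element $\hat{v}$. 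Bookkeeping with normal forms via Corollary~\ref{uniqueness from normal form} then forces the strict inequality $|v^{-1}\alpha|_{\Gamma_k} + |v^{-1}\alpha\beta|_{\Gamma_k} > |\beta|_{\Gamma_k}$. Setting $C:=N$ and $R$ to exceed $R_0$ by the resulting margin completes the argument. The severe Ackermannian distortion of $\Lambda_k$ in $\Gamma_k$ is precisely what makes Lemma~\ref{lem: N exists} indispensable here: it is the only tool that quantitatively bridges $F$--length and $\Gamma_k$--length across the $t$--conjugation in our setting.
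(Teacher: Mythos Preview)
Your approach has a genuine gap at the ``main obstacle'' step, and it stems from misidentifying the key tool.  The heart of the lemma is showing that the $\Gamma_k$--geodesic between $\hat\alpha$ and $\widehat{\alpha\beta}$ (equivalently, in your formulation, that any short $v$ near it) stays far from $e$ \emph{in $\Gamma_k$}, given only that the $F$--geodesic between the same points stays far from $e$ \emph{in $F$}.  This is precisely the statement that the Cannon--Thurston map $\partial F\to\partial\Gamma_k$ for the \emph{normal} subgroup $F\trianglelefteq\Gamma_k$ is well--defined, and the paper invokes Mitra's theorem on normal hyperbolic subgroups (Theorem~4.3 of \cite{CTnormal}) to obtain exactly this.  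You never invoke that result, and nothing in your argument supplies a substitute.

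Your proposed replacement, Lemma~\ref{lem: N exists}, cannot fill this gap.  That lemma controls the position of the final $a_r^{\pm1}$ inside the $F$--word $\theta^n(\hat w)$; it says nothing about $\Gamma_k$--word--length.  Knowing that $\hat\alpha$ and $\widehat{\alpha\beta}$ share a long $F$--prefix, or that certain $a_r^{\pm1}$ sit deep in their normal forms, does not bound $|v^{-1}\alpha|_{\Gamma_k}$ or $|v^{-1}\alpha\beta|_{\Gamma_k}$ from below, because $F$ is itself distorted in $\Gamma_k$: long $F$--words can represent short $\Gamma_k$--elements.  Your sentence ``Bookkeeping with normal forms via Corollary~\ref{uniqueness from normal form} then forces the strict inequality'' is precisely where the argument is missing.  (Note also that Lemma~\ref{lem: N exists} requires $w$ to end in $(a_rt)^{\pm1}$, which $\alpha$ need not.)  In the paper, Lemma~\ref{lem: N exists} is used later, in the proof of Theorem~\ref{existence thm}, for a different purpose; Lemma~\ref{lemma:farshadow} is proved independently of it via the three--segment decomposition $[\alpha,\hat\alpha]_{\Gamma_k}\cup[\hat\alpha,\widehat{\alpha\beta}]_{\Gamma_k}\cup[\widehat{\alpha\beta},\alpha\beta]_{\Gamma_k}$, with the two outer $t$--power segments handled by Lemma~\ref{uniqueness of exponent} and the middle segment by Mitra's theorem.
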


\begin{proof}
As $\Gamma_k$ is hyperbolic, there is some $\delta >0$ such that every geodesic triangle in $C(\Gamma_k)$ is $\delta$-slim. 

Any geodesic segment $[\alpha,\alpha\beta]_{\Gamma_k}$ is in a $2\delta$-neighborhood of any piecewise-geodesic path $[\alpha,\wh{\alpha}]_{\Gamma_k}\cup[\wh{\alpha},\wh{\alpha\beta}]_{\Gamma_k}\cup[\wh{\alpha\beta},\alpha\beta]_{\Gamma_k}$  in $C(\Gamma_k)$.  So it suffices to have $[\alpha,\wh{\alpha}]_{\Gamma_k}$, $[\wh{\alpha},\wh{\alpha\beta}]_{\Gamma_k}$, and $[\wh{\alpha\beta},\alpha\beta]_{\Gamma_k}$   stay  at least $K + 2\delta$  away  from the identity element $e$.

To  ensure $d(e, [\alpha,\wh{\alpha}]_{\Gamma_k}) \geq K + 2\delta$, we need  $[\alpha,\wh{\alpha}]_{\Gamma_k}$ to avoid finitely many  elements   of  $ {\Gamma_k}$, say $g_1, \ldots, g_m$.   Since  $\alpha = \wh{\alpha} t^n$ for some $n$, there is  a unique geodesic $[\alpha, \wh{\alpha}]$ joining $\alpha$ to $\wh{\alpha}$ in $C(\Gamma_k)$  and it is a succession of edges all labelled $t$.   So if $g_i$ is on $[\alpha, \wh{\alpha}]$ then $g_i t^{j_i} = \alpha \in \Lambda_k$  for some $j_i$. But then by  Lemma~\ref{uniqueness of exponent hyperbolic}, it suffices for  $\alpha$ not to be one of  at most $m$   elements  of $\Lambda_k$.  So  it suffices to ensure $| \alpha |_{\Lambda_k}$ is sufficiently long.     

Since $| \alpha \beta |_{\Lambda_k} \geq  | \alpha |_{\Lambda_k}$, we find   $d([\wh{\alpha\beta},\alpha\beta]_{\Gamma_k}, e) \geq K + 2 \delta$ also.

Finally, we consider  $[\wh{\alpha},\wh{\alpha\beta}]_{\Gamma_k}$.   
The strategy is to use the existence of the Cannon--Thurston map $\partial F\to\partial\Gamma_k$ (not $\partial\Lambda_k\to\partial\Gamma_k$!) to ensure this geodesic stays far (at least $K+2\delta$) from the identity.
As previously mentioned, the main theorem in \cite{CTnormal} is that Cannon--Thurston maps always exist for infinite hyperbolic normal subgroups of hyperbolic groups.  Thus condition \ref{Mitra M} of Lemma ~\ref{lemma:Mitra} must hold where $X=C(\Gamma_k)$ and $Y=C(F)$ and $f$ is the inclusion map.
Since $C(F)$ is a tree, the geodesic segment $[\wh{\alpha},\wh{\alpha\beta}]_F$ is a subset of the union of the geodesic segments comprising the shadow of $\beta$, which do not intersect $B_F(e,R)$ by assumption.  Thus $[\wh{\alpha},\wh{\alpha\beta}]_F$ is disjoint from $B_F(e,R)$.  So condition \ref{Mitra M} says that choosing $R$ large enough makes $[\wh{\alpha},\wh{\alpha\beta}]_{\Gamma_k}$  arbitrarily far (so at least $K+2\delta$) from $e$, as desired.
\end{proof}

We are now ready to use Corollary~\ref{cor:2point3} to show the Cannon--Thurston map $\partial {\Lambda_k}\to\partial {\Gamma_k}$ exists.

\begin{proof}[Proof of Theorem~\ref{existence thm}]
Fix some $\delta>0$ so that all geodesic triangles in $C(\Gamma_k)$ are $\delta$-slim.

For  integers $A, B \geq 0$ and $r$ with $k\geq r\geq 1$, let $N(r, A, B)$ be the least integer $N$ as per Lemma~\ref{lem: N exists}.  Given an integer $R>0$, recursively define a sequence $N_k(R),N_{k-1}(R),\ldots,N_1(R)$ of positive integers   by: 
$$N_k(R) \ :=  \ N(k,R,0),$$
and for $r=k-1, k-2, \ldots, 1$     
$$N_{r}(R) \ := \ N\left(r,R+\max\left\{    |\wh{w}|_{F}    \left| \,    \Lambda_k\textup{-words }  w  \textup{ with }  |w|_{\Lambda_k}= \sum_{j=r+1}^k N_j(R) \right.  \right\},\sum_{j=r+1}^k N_j(R) \right).$$

Suppose $M''>0$ is given.  Let $K=M''+(2\delta+1)k$.  Let $C,R>0$ be obtained from $K$ as per Lemma~\ref{lemma:farshadow}.
Recall the map $\Phi$ from the hyperbolic hydra group $\Gamma_k$ to the hydra group $G_k$ defined by  $a_i \mapsto a_{\max \{1,i\}}$, $b_j \mapsto 1$ and $t \mapsto t$ for all $i,j$.
Let $L$ be the maximum of $|\theta^{-n}(s)|_F$ ranging over all  $s \in \{b_1,\ldots,b_l\}$ and all $n \in \Z$ for which there exists  $\wh{u}\in F$ such that $\wh{u}t^n\in\Lambda_k$ and the reduced word representing $\Phi(\wh{u})$ in $F(a_1, \ldots, a_k)$ has length  less than $R$.  
(There are only finitely many such $n$ since $\wh{u}t^n\in\Lambda_k$ implies $\Phi(\wh{u}t^n) = \Phi(\wh{u}) t^n \in H_k$, and for any   $x \in F(a_1, \ldots, a_k)$, there is at most one $m \in \Z$ such that $xt^m \in H_k$ 
by Lemma~\ref{uniqueness of exponent}.) 
Choose $C'>C$ so that every $\Lambda_k$-word $u$ of length $\abs{u}_{\Lambda_k}  \geq C'$ has free-by-cyclic normal form $\wh{u}t^n$ with $\abs{\wh{u}}_F \geq R+(L/2)$.

Suppose $\alpha \beta$ is a reduced $\Lambda_k$-word such that $|\alpha|_{\Lambda_k} \geq N :=  C' + \sum_{r=1}^k N_{r}(R)$. 
Express $\alpha$ as $w_k\cdots w_0$ where $|w_{r}|_{\Lambda_k}=N_{r}(R)$ for   $1 \leq r \leq k$.
In particular, $|\alpha|_{\Lambda_k}  \geq |w_0|_{\Lambda_k} \geq C'>C$.
Let $\beta_r$ denote the longest prefix of $\beta$ in $\Lambda_r$ and let $\gamma_r$ denote (any) geodesic $[\alpha,\alpha\beta_r]_{\Gamma_k}$.   In particular, $\gamma_k$ is an arbitrary   geodesic $[\alpha,\alpha\beta]_{\Gamma_k}$.
We will show that $\gamma_k$ lies at least a distance $M''$ from $e$ in $C(\Gamma_k)$.
Corollary~\ref{cor:2point3} will then complete the proof.

Suppose, for a contradiction, that $d_{\Gamma_k}(\gamma_k, e)<M''$.   Let $\wh{\alpha}t^n$ be the free-by-cyclic normal form of $\alpha$ in $\Gamma_k$.  

We claim that the shadow of the suffix $\beta_0$ of $\alpha \beta_0$ does not intersect $B_F(e,R)$.
The endpoints of the geodesic segments in $F$ comprising this shadow are all of the form $\wh{\alpha}\theta^{-n}(x)$ for various $x\in F(b_1,\ldots, b_l)$.  There are two cases to consider: the length of the reduced word in $F(a_1, \ldots, a_k)$ representing $\Phi(\wh{\alpha})$ is at least $R$ and is less than $R$. 
In the former case,  because $\wh{\alpha}$ contains at least $R$ letters $a_i^{\pm 1}$ ($0 \leq i \leq k$), the  closest approach of any such geodesic to $e$ (i.e.\ the Gromov product of its endpoints) is at least $R$. 
In the latter case, $L$ is an upper bound for the length of the constituent geodesics in the shadow of $\beta_0$, and so, by  definition of $C'$, the shadow of $\beta_0$ does not intersect $B_F(e,R)$.
In either case, the shadow stays outside of $B_F(e,R)$.

On the other hand, the following claim, in the case $r=0$, 
shows that $$d_{\Gamma_k}([\alpha,\alpha\beta_0]_{\Gamma_k},e) \ <  \ M''+(2\delta+1)k \ = \ K,$$ so Lemma~\ref{lemma:farshadow} implies the shadow of $\beta_0$  intersects $B_F(e,R)$.
This contradiction will prove the theorem.

\emph{\textbf{Claim.}}  For $r = k, k-1, \ldots, 1, 0$,
\vspace*{-6pt}
\begin{enumerate}
 \item[($\textit{i}_r$).] $d_{\Gamma_k}(\gamma_r,e)<M''+(2\delta+1)(k-r)$, and
 \item[($\textit{ii}_{r}$).] $w_rw_{r-1}\cdots w_0\in\Lambda_r$.
\end{enumerate}
We prove this claim using   downward induction on $r$.  

The base case $r=k$ is straightforward:   $\gamma_k = [\alpha, \alpha \beta]_{\Gamma_k}$   and $w_k\cdots w_0=\alpha\in\Lambda_k$ by definition, and   $d_{\Gamma_k}(\gamma_k, e) < M''$ by hypothesis.  

Now we prove  that ($\textit{i}_{r+1}$) and ($\textit{ii}_{r+1}$) implies ($\textit{i}_r$) and ($\textit{ii}_r$) for $r = k-1, \ldots, 1, 0$. (In the case $r=0$, we must interpret each instance of ``$(a_{r+1}t)^{\pm 1}$'' in the following to mean ``$(a_0 t)^{\pm1}$ or $(a_1 t)^{\pm1}$'' and each occurrence of ``$a_{r+1}^{\pm 1}$'' to mean ``$a_{0}^{\pm 1}$ or $a_{1}^{\pm 1}$''.)

We will make repeated use of the following lemma.

\begin{lemma} 
\label{lemma:farshadow2}
Suppose $w_k$,  \ldots, $w_{r+1}$ are as defined earlier, and  $w_k\cdots w_{r+1}x(a_{r+1}t)^{\pm
1}yz$ is a reduced $\Lambda_{k}$-word in which the subwords  $x$, $y$ and $z$ are $\Lambda_{r+1}$-words and $|w_k\cdots w_{r+1}x(a_{r+1}t)^{\pm
1}y|_{\Lambda_k}\geq C$.  If $\gamma$ is any geodesic in $C(\Gamma_k)$ from $w_k\cdots w_{r+1}x(a_{r+1}t)^{\pm
1}y$ to $w_k\cdots w_{r+1}x(a_{r+1}t)^{\pm
1}yz$, then $d_{\Gamma_k}(\gamma, e)  \geq K$.   
\end{lemma}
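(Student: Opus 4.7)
The plan is to apply Lemma~\ref{lemma:farshadow} with the roles of $\alpha$ and $\beta$ played by $\alpha^\ast := w_k\cdots w_{r+1}\,x\,(a_{r+1}t)^{\pm 1}\,y$ and $\beta^\ast := z$, respectively.  Then $\alpha^\ast\beta^\ast$ is exactly the reduced $\Lambda_k$--word of the hypothesis and $\gamma=[\alpha^\ast,\alpha^\ast\beta^\ast]_{\Gamma_k}$, so, with the constants $K,C,R$ of the ambient proof of Theorem~\ref{existence thm} already in hand, Lemma~\ref{lemma:farshadow} yields $d_{\Gamma_k}(\gamma,e)\geq K$ as soon as I verify (a) $|\alpha^\ast|_{\Lambda_k}\geq C$ and (b) the shadow of $\beta^\ast$ stays outside $B_F(e,R)$.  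Condition (a) follows immediately from $|\alpha^\ast|_{\Lambda_k}\geq|w_k\cdots w_{r+1}|_{\Lambda_k}=\sum_{j=r+1}^k N_j(R)$, which is at least $C$ by our construction of the $N_j(R)$ (one may enlarge any of them without disturbing the conclusion of Lemma~\ref{lem: N exists}).

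For condition (b), each edge of the shadow is a geodesic $[\wh{\alpha^\ast z_i},\wh{\alpha^\ast z_{i+1}}]_F$ in the tree $C(F)$, so its distance from $e$ equals the length of the longest common prefix of its two endpoints.  Write $v:=w_k\cdots w_{r+1}$ with free--by--cyclic normal form $\wh v\,t^{m_v}$, and $s_j:=x(a_{r+1}t)^{\pm 1}yz_j$ (a $\Lambda_{r+1}$--word, since $x,y,z$ are) with normal form $\wh{s_j}\,t^{m_{s_j}}$.  The semidirect--product identity $\alpha^\ast z_j=\wh v\,\theta^{-m_v}(\wh{s_j})\,t^{m_v+m_{s_j}}$ identifies $\wh{\alpha^\ast z_j}$ with the free--group reduction of $\wh v\cdot\theta^{-m_v}(\wh{s_j})$.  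I would isolate the $a_{r+1}^{\pm 1}$ in $\theta^{-m_v}(\wh{s_j})$ arising from the explicit $(a_{r+1}t)^{\pm 1}$ in $s_j$, and show that it (i) survives the reduction against $\wh v$, (ii) lies at depth $\geq R$ in $\wh{\alpha^\ast z_j}$, and (iii) is untouched when we pass from $z_i$ to $z_{i+1}$ (right--multiplication by a single $\Lambda_{r+1}$--generator only edits the tail past this letter).  These three facts force $\wh{\alpha^\ast z_i}$ and $\wh{\alpha^\ast z_{i+1}}$ to agree on a prefix of length $\geq R$, so the shadow edge stays outside $B_F(e,R)$.

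The main obstacle is the depth estimate (ii).  Here I would invoke Lemma~\ref{lem: N exists} with $r$ replaced by $r+1$ and with the very $A=A_{r+1}$ and $B=B_{r+1}$ from the recursion defining $N_{r+1}(R)$.  Splitting $v=(w_k\cdots w_{r+2})\cdot w_{r+1}$, the $t$--exponent contributed by $w_k\cdots w_{r+2}$ is bounded by $B_{r+1}=\sum_{j=r+2}^k N_j(R)$; applying Lemma~\ref{lem: N exists} to $w_{r+1}$ extended by $x$ and $(a_{r+1}t)^{\pm 1}$ then places the final $a_{r+1}^{\pm 1}$ of the resulting $\theta^n(\wh{\,\cdot\,})$ at depth $\geq A_{r+1}$.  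The recursive choice $A_{r+1}=R+\max\{|\wh w|_F\mid |w|_{\Lambda_k}=B_{r+1}\}$ supplies exactly the slack needed so that, after combining with $\wh{w_k\cdots w_{r+2}}$ (whose length in $F$ is at most the above maximum) and absorbing the letters coming from $y$ and the tail of $z$, the distinguished $a_{r+1}^{\pm 1}$ still lies at depth $\geq R$ in $\wh{\alpha^\ast z_j}$.  This gives (b), and Lemma~\ref{lemma:farshadow} concludes.
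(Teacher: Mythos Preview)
Your approach is essentially the same as the paper's: both reduce to Lemma~\ref{lemma:farshadow} with $\alpha=w_k\cdots w_{r+1}x(a_{r+1}t)^{\pm1}y$ and $\beta=z$, and both verify the shadow condition by applying Lemma~\ref{lem: N exists} to $w_{r+1}x(a_{r+1}t)^{\pm1}$ with $n$ the $t$--exponent of $w_k\cdots w_{r+2}$, using precisely the recursive definition of $N_{r+1}(R)$ to absorb $|\widehat{w_k\cdots w_{r+2}}|_F$ and leave depth $\geq R$. One minor wrinkle: you first write the identity using $m_v$, the exponent of all of $v=w_k\cdots w_{r+1}$, but then (correctly) switch to the exponent of $w_k\cdots w_{r+2}$ when invoking Lemma~\ref{lem: N exists}; the paper works with the latter throughout, and you should too, since that is the quantity bounded by $B_{r+1}$ in the recursion.
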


To prove this lemma we consider the free-by-cyclic normal form $\widehat{ w_k\cdots w_{r+2} }t^{n_r}$ for  $w_k\cdots w_{r+2}$ in $\Gamma_k$.
Now, $\abs{w_k \cdots w_{r+2} }_{\Lambda_k} = \sum_{j=r+2}^k N_j(R)$ by construction, so $\abs{n_r} \leq \sum_{j=r+2}^k N_j(R)$.
So, by definition, $$N_{r+1}(R) \ \geq \  N(r+1,R+|\widehat{w_k\cdots w_{r+2}}|_{F},|n_r|).$$
By hypothesis  $(\textit{ii}_{r+1})$, we see $w_{r+1}x(a_{r+1}t)^{\pm
1}  \in\Lambda_{r+1}$, and $\abs{w_{r+1}}=N_{r+1}(R)$ by construction, so
the final $a_{r+1}^{\pm 1}$ in $\theta^{-n_r}(\widehat{w_{r+1}x(a_{r+1}t)^{\pm
1}})$ is at least $R+|\widehat{w_k\cdots w_{r+2}}|_{F}$ letters in.
Therefore, the final $a_{r+1}^{\pm 1}$ in
$\widehat{w_k\cdots w_{r+2}} \ \theta^{-n_r}( \wh{w_{r+1}x(a_{r+1}t)^{\pm 1}} )$
is at least $(R+|\widehat{w_k\cdots w_{r+2}}|_{F})-|\widehat{w_k\cdots w_{r+2}}|_{F}=R$ letters in.
Since $y$ and $z$ are $\Lambda_{r+1}$-words and $w_k\cdots w_{r+1}x(a_{r+1}t)^{\pm
1}yz$ is reduced, this implies the shadow of $z$ cannot intersect $B_F(e,R)$.
So, by Lemma~\ref{lemma:farshadow},   $d_{\Gamma_k}(\gamma,e)\geq K$, completing the proof of the lemma.

Returning to the proof of the claim, we will consider two cases: $\beta_r=\beta_{r+1}$ and $\beta_r\neq\beta_{r+1}$.
In the former case, we may assume $\gamma_r=\gamma_{r+1}$.
So ($\textit{i}_r$) follows immediately from ($\textit{i}_{r+1}$).
Since ($\textit{i}_r$) implies $d_{\Gamma_k}(\gamma_r,e) < K$,
 ($\textit{ii}_{r+1}$) and Lemma~\ref{lemma:farshadow2} with $z=\beta_r$ shows that $w_r\cdots w_0$
cannot be expressed as  $x(a_{r+1}t)^{\pm 1}y$.   So $w_r\cdots
w_0\in  \Lambda_{r}$ and we have ($\textit{ii}_r$).

Next, assume  $\beta_r\neq\beta_{r+1}$.  The (reduced) word $\beta_{r+1}$ can be expressed as $\beta_r(a_{r+1}t)^{\pm 1}\beta_r'$ for some $\beta_r'\in\Lambda_{r+1}$.
Let $\rho_{r}$ be the geodesic segment in $C(\Gamma_k)$ labelled $(a_{r+1}t)^{\pm 1}$ connecting $\alpha\beta_r$ to $\alpha\beta_r(a_{r+1}t)^{\pm 1}$.
Let $\gamma_r'=[\alpha\beta_r(a_{r+1}t)^{\pm 1},\alpha\beta_{r+1}]_{\Gamma_k}$.  Then, by Lemma~\ref{lemma:farshadow2} with $x=w_r\cdots w_0\beta_r$,  $y$ the empty word, and $z=\beta_r'$,
\begin{equation}
 \label{eq:gamma prime}
 d_{\Gamma_k}(\gamma_r',e) \ \geq \ K.
\end{equation}

If $(a_{r+1}t)^{\pm 1}$ occurs in $w_r\cdots w_0$, then Lemma~\ref{lemma:farshadow2} with $x(a_{r+1}t)^{\pm 1}y=w_r\cdots w_0$ and $z=\beta_r$  shows that 
\begin{equation}
 \label{eq:gamma itself}
 d_{\Gamma_k}(\gamma_r,e) \ \geq \ K.
\end{equation}
 
By the slim-triangles condition for $C(\Gamma_k)$, 
$\gamma_{r+1}$ is contained in  the $2\delta$-neighborhood of  $\gamma_r\cup \rho_{r} \cup \gamma_r'$ and hence in the $(2\delta+1)$-neighborhood  of $\gamma_r\cup \gamma_r'$. 
So  
\begin{align*}
\min\{d(e,\gamma_r),d(e, \gamma_r')\}  \ & \leq \  d(e,  \gamma_{r+1})+(2\delta+1) \\ 
& < \ M''+  (2\delta+1)(k-(r+1)) +(2\delta+1) \\
&  =  \ M''+(2\delta+1)(k-r), 
\end{align*}
the second inequality coming from ($\textit{i}_{r+1}$).  

But by (\ref{eq:gamma prime}), $d(e,\gamma_r')\geq K\geq M''+(2\delta+1)(k-r)$. So $\min\{d(e,\gamma_r),d(e, \gamma_r')\}=d(e,\gamma_r)$ and ($\textit{i}_r$) follows. 

Moreover, (\ref{eq:gamma itself}) cannot be true since it contradicts ($\textit{i}_r$), so $(a_{r+1}t)^{\pm 1}$ does not occur in $w_r\cdots w_0$ and ($\textit{ii}_r$) follows.
This completes the induction step of the claim, and thus proves the theorem by contradiction.
\end{proof}

\section{Wildness of Cannon--Thurston maps}  \label{Wild continuity} \label{Wildness}
\begin{proof}[Proof of  Theorem~\ref{Distortion and Wildness}.]
  We have that $\Gamma$ and $\Lambda$ are $(\delta_{\Gamma})$- and $(\delta_{\Lambda})$-hyperbolic, respectively, for some  $\delta_{\Gamma}, \delta_{\Lambda} > 0$.  Let $\imath: \Lambda \to \Gamma$ denote the inclusion map and  $\ihat: \partial \Lambda \to \partial \Gamma$ denote the Cannon--Thurston map.
 
Since $\Lambda$ is non-elementary,
   $|\ihat(\partial \Lambda)|=\infty$ by \cite[Thm. 12.2(1)]{KB}.  We may thus choose 
$p_1,p_2,p_3\in\partial \Lambda$
with $\ihat p_1,\ihat p_2, \ihat p_3\in\partial \Gamma$ distinct. Let 
$C=2\delta_{\Lambda}+\max\{(p_i \cdot p_j)_e^{\Lambda} \mid 1\leq i<j\leq 3\}$.

By definition of the distortion function (see Section~\ref{intro}), we can take a sequence $h_n\in \Lambda$ with 
$d_{\Gamma}(e,h_n)\leq n$ and $d_{\Lambda}(e,h_n)=\Dist_{\Lambda}^{\Gamma} (n)$.  By Lemma~\ref{BH1 433 Lemma}\ref{BH1 433 Lemma 1}
\begin{align*}
(p_i\cdot p_j)_e^{\Lambda}& \ = \ (h_np_i\cdot h_np_j)_{h_n}^{\Lambda} \ \geq \  \min \lbrace (e\cdot h_np_i)_{h_n}^{\Lambda},(e\cdot h_np_j)_{h_n}^{\Lambda}\rbrace-2\delta_{\Lambda},
\end{align*}
so we can choose $i=i(n)$ and $j=j(n)\in\{1,2,3\}$ with $i\neq j$ such that
\begin{align*}
(e\cdot h_np_i)_{h_n}^{\Lambda}, (e\cdot h_np_j)_{h_n}^{\Lambda} \ \leq \  C.
\end{align*}

Combined with Lemma~\ref{BH1 433 Lemma}\ref{BH1 433 Lemma 2} this gives that for $k = i,j$, 
\begin{align*}
(h_n\cdot h_np_k)_e^{\Lambda} \ \geq \  d_{\Lambda}(e,h_n)-(e\cdot h_np_k)_{h_n}^{\Lambda}-\delta_{\Lambda} \ \geq \ \Dist_{\Lambda}^{\Gamma}(n)-C-\delta_{\Lambda}.
\end{align*}
So, by Lemma \ref{BH1 433 Lemma}\ref{BH1 433 Lemma 1}, $$(h_np_i\cdot h_np_j)_e^{\Lambda} \ \geq \ \min\{(h_n\cdot h_np_i)_e^{\Lambda},(h_n\cdot h_np_j)_e^{\Lambda}\}-2\delta_{\Lambda} \ \geq \ \Dist_{\Lambda}^{\Gamma}(n)-C-3\delta_{\Lambda}.$$

Writing $\beta:=k_2   r^{C+3\delta_{\Lambda}}$,  where $k_2$ is as per \eqref{visual eq} in Section~\ref{hyperbolic review} applied to $\partial \Lambda$, we get
\begin{equation}
\label{Distortion and Wildness 1}
d_{\partial \Lambda}(h_np_i,h_np_j) \ \leq \  k_2 r^{ -(h_np_i \cdot h_np_j)_e^{\Lambda} }  \ \leq \ \frac{\beta}{r^{\Dist_{\Lambda}^{\Gamma}(n)}}.
\end{equation}

On the other hand, using Lemma \ref{BH1 433 Lemma}\ref{BH1 433 Lemma 3} for the first inequality,
\begin{align*}
(\ihat(h_np_i)\cdot \ihat(h_np_j))_e^{\Gamma} \ &  = \ (h_n\ihat p_i\cdot h_n\ihat p_j)_e^{\Gamma}\\
&\leq \  d_{\Gamma}(e,[h_n\ihat p_i,h_n\ihat p_j]_{\Gamma})+ {8\delta_{\Gamma}}\\
&\leq \ d_{\Gamma}(e,h_n)+d_{\Gamma}(h_n,[h_n\ihat p_i,h_n\ihat p_j]_{\Gamma})+ {8\delta_{\Gamma}}\\
&\leq \ n+d_{\Gamma}(e,[\ihat p_i,\ihat p_j]_{\Gamma})+ {8\delta_{\Gamma}}.
\end{align*}

Writing $\alpha   :=   k_1/s^{{8\delta_{\Gamma}}+\max\{d_{\Gamma}(e,[\ihat p_i,\ihat p_j]_{\Gamma}) \mid 1\leq i<j\leq 3\}}$,   where $k_1$ is as per \eqref{visual eq} in Section~\ref{hyperbolic review} applied to $\partial {\Gamma}$,  we then get
\begin{equation}
\label{Distortion and Wildness 2}
d_{\partial \Gamma}(\ihat(h_np_i),\ihat(h_np_j))  \ \geq \  k_1 s^{ - (\ihat(h_np_i),\ihat(h_np_j))_e^{\Gamma}  }   \ \geq \ \frac{\alpha}{s^n}.
\end{equation}

Combining   (\ref{Distortion and Wildness 1}) and (\ref{Distortion and Wildness 2}) yields the   inequality claimed in Theorem~\ref{Distortion and Wildness}. 
\end{proof}

Corollary~\ref{wildness thm} will follow from the following proposition.

\begin{prop}  \label{wildness prop}
For all $k \geq2$, the modulus of continuity $\varepsilon(\delta)$ of the Cannon--Thurston  map $\partial{\Lambda_k}\to\partial{\Gamma_k}$ for hyperbolic hydra  has the property that there exist $C_0, C_1 >0$ and $C_2>1$  such that for all ${\eta} \in (0, C_0)$,  
 \begin{equation} \label{modulus prop eqn}
  \varepsilon \left(  \frac{1}{ C_2^{A_k \left( \lfloor C_1 \log (1/{\eta})\rfloor \right)}} \right)   \ \geq \  {\eta}.
  \end{equation}
\end{prop}

\begin{proof} 
For convenience, extend the domains of the functions $A_k : \mathbb{N} \to \mathbb{N}$ and $\Dist^{\Gamma_k}_{\Lambda_k} :\mathbb{N} \to \mathbb{N}$ to  $[1,\infty)$ by declaring the functions to be constant on the half-open intervals $[n, n+1)$.

From  Theorem~\ref{Distortion and Wildness} we have   \[ \varepsilon\left(\frac{\beta}{r^{\Dist^{\Gamma_k}_{\Lambda_k}(n)}}\right) \ \geq \ \frac{\alpha}{s^n} \] for all real $n \geq 0$ and some constants $\alpha, \beta > 0 $ and $r, s >1$.
Thus  for all ${\eta} \leq \alpha $,
\begin{equation}
   \varepsilon\left(\frac{1}{\exp{\left(\log(r)\Dist^{\Gamma_k}_{\Lambda_k}(\log_s (\alpha / {\eta}) )-\log(\beta)\right)}}\right) \ \geq \ {\eta}. 
 \label{varep bound}
\end{equation}

As   $\Dist^{\Gamma_k}_{\Lambda_k} \succeq A_k$ by  \cite{HypHydra}, there exists $C >0$ such that $A_k(n) \leq C  \Dist^{\Gamma_k}_{\Lambda_k} ( Cn +C ) + Cn+C$ for all real $n$, and therefore
$$\Dist^{\Gamma_k}_{\Lambda_k} (N) \ \geq \  \frac{1}{C} A_k \left(  \frac{N-C}{C}  \right) - \frac{N}{C}$$
for $N \geq 2C$.   So 
\begin{equation*}
\log(r)\Dist^{\Gamma_k}_{\Lambda_k}(\log_s (\alpha / {\eta}) ) -\log(\beta) \ \geq \ K_1 A_k ( K_2 \log (1/ {\eta}) - K_3) - K_4 \log (1/ {\eta}) - K_5 
\end{equation*}
 for all ${\eta} \in (0, K_0)$, for suitable constants $K_0, \ldots, K_5>0$.  By shrinking $K_0$ if necessary, we can make $\log(1/{\eta})$ arbitrarily large, so that we may absorb the constant $K_3$ into $K_2$.  Moreover, $A_k$ grows faster than a linear function as $k \geq 2$, so (by further shrinking $K_0$) the constants $K_4$ and $K_5$ can be absorbed into $K_1$.  Thus
\begin{equation}
\log(r)\Dist^{\Gamma_k}_{\Lambda_k}(\log_s (\alpha / {\eta}) ) -\log(\beta) \ \geq \ K_1 A_k ( K_2 \log (1/ {\eta})). 
\label{dist bound}
\end{equation}

So combining \eqref{varep bound} and \eqref{dist bound} and setting $C_0=K_0$, $C_1=K_2$, and $C_2=e^{K_1}$, we have the result claimed.
\end{proof}

Plugging $\eta=1/n$ into inequality (\ref{modulus prop eqn}) yields 
$$\varepsilon \left(  \frac{1}{ C_2^{A_k \left( \lfloor C_1 \log (n)\rfloor \right)}} \right)   \ \geq \  {\frac1n}$$
for all sufficiently large $n$.
So Corollary~\ref{wildness thm}, which asserts $$\varepsilon\left(\frac{1}{A_{k-1}(n)}\right)  \ \geq \  \frac{1}{n}$$ for all sufficiently large $n$, follows from the fact that
$$C_2^{A_k \left( \lfloor C_1 \log (n)\rfloor \right)}\gg A_k \left( \lfloor C_1 \log (n)\rfloor \right)
=A_{k-1}\bigg(A_k \left( \lfloor C_1 \log (n)\rfloor -1\right)\bigg)
\ge A_{k-1}\bigg(A_3 \left( \lfloor C_1 \log (n)\rfloor -1\right)\bigg)
\gg A_{k-1}(n).$$
For the last $\gg$ inequality, recall that
$A_3(m) =     \mbox{$2$\raisebox{4pt}{$2$} \!\! \raisebox{10pt}{\reflectbox{$\ddots$}}}   \raisebox{17pt}{$2$}   \raisebox{7pt}{$ \left. \rule{0mm}{16pt} \right\}  $} \! \raisebox{6pt}{$m$}$.

\bibliography{hydrabib}
\end{document}